\documentclass[11pt]{amsart}

\usepackage{amsmath, amssymb, amsthm, mathtools}
\usepackage{geometry}
\usepackage{tikz}
\usepackage[colorlinks=true, linkcolor=blue, citecolor=blue, urlcolor=blue]{hyperref}
\usepackage[capitalize, noabbrev]{cleveref}
\theoremstyle{plain}
\newtheorem{theorem}{Theorem}[section]
\newtheorem{lemma}[theorem]{Lemma}
\newtheorem{proposition}[theorem]{Proposition}
\newtheorem{corollary}[theorem]{Corollary}

\theoremstyle{definition}
\newtheorem{definition}[theorem]{Definition}

\theoremstyle{remark}
\newtheorem{remark}[theorem]{Remark}
\newtheorem{convention}[theorem]{Convention}

\Crefname{theorem}{Theorem}{Theorems}
\Crefname{lemma}{Lemma}{Lemmas}
\Crefname{proposition}{Proposition}{Propositions}
\Crefname{corollary}{Corollary}{Corollaries}
\Crefname{definition}{Definition}{Definitions}
\Crefname{remark}{Remark}{Remarks}
\Crefname{example}{Example}{Examples}
\Crefname{section}{\S}{\S\S}
\Crefname{subsection}{\S}{\S\S}

\DeclareMathOperator{\arsinh}{arsinh}
\DeclareMathOperator{\arcosh}{arcosh}
\DeclareMathOperator{\artanh}{artanh}
\DeclareMathOperator{\sech}{sech}
\DeclareMathOperator{\csch}{csch}

\newcommand{\R}{\mathbb{R}}

\newcommand{\Z}{\mathbb{Z}}

\newcommand{\eps}{\varepsilon}
\newcommand{\dd}{\,d}
\newcommand{\deriv}[1]{\frac{d}{d#1}}

\title{Integral-Differential Calculus}
\author{Grant Molnar}
\email{molnar.grant.5772@gmail.com}
\date{\today}

\begin{document}

\begin{abstract}
We give an exposition of the Newton-Leibniz calculus. We begin by defining the integral as a limit of Riemann sums, verify the integrals of the standard catalog of functions by direct manipulation, prove the substitution lemmas as theorems about Riemann sums, cross the Fundamental Theorem of Calculus, and harvest the differential calculus on the other side.
\end{abstract}

\maketitle

\section{Introduction}

In this note, we define the integral as a limit of Riemann sums, verify its standard properties, compute the integrals of the usual catalog of functions directly from the definition, prove $u$-substitution and integration by parts as theorems about Riemann sums, and then cross the Fundamental Theorem of Calculus and harvest the derivative table on the other side. 

Developing the Newton-Leibniz calculus this way is natural and beautiful. Several of the direct integral evaluations --- the geometric-series arguments for $\int b^t\dd t$, $\int_0^x \cos(t)\dd t$, and $\int_0^x \sin(t)\dd t$, the telescoping argument for $\int_0^x \sec^2(t)\dd t$ and $\int_a^b \csc^2(t)\dd t$, and the functional-equation derivation of $\log$ from $\int_1^x dt/t$ --- are instructive in their own right, and the chain rule and product rule fall out from $u$-substitution and integration by parts as pleasant corollaries.

Some authors have advocated for an alternative order, in which one begins by defining the derivative as a limit of difference quotients, verifies the analogous rules (linearity, product rule, chain rule), and derives the integral table from the derivative table after crossing the Fundamental Theorem of Calculus. This alternative has some computational advantages. Elementary derivatives are mechanical to compute, and a student following this order acquires a great deal of fluency with the formalism before encountering any difficult integration step. However, many students under this pedagogy never develop a firm geometric intuition for integration, and all lose the direct view of where the integral table comes from. Readers interested in the differential-first approach may consult the peculiar references on the subject, among which \cite{spivak} is representative.

\subsection*{Conventions and prerequisites}

We use the Riemann integral throughout. The gauge (Henstock--Kurzweil) integral \cite{bartle} has well-known advantages for proving the Fundamental Theorem of Calculus in maximum generality, but we will only ever integrate continuous functions, where the two integrals agree, and the Riemann construction is more familiar.

We assume the reader is comfortable with the construction of the real numbers and other basic ideas from real analysis such as completeness, continuity (including uniform and Lipschitz continuity), and the intermediate and extreme value theorems on a compact interval. We assume that sine and cosine have been defined geometrically (for instance via arc length on the unit circle, or as the unique continuous solutions to the addition formulas with appropriate normalizations) and that the addition and half-angle identities are at our disposal. We also recall the geometric observation that
\begin{equation}\label{eq:sin-over-theta}
    \lim_{\theta \to 0} \frac{\sin(\theta)}{\theta} = 1.
\end{equation}
No further prerequisites are presumed.

\section{The integral as primitive}\label{sec:integral}

We collect the definition of the Riemann integral and a handful of its standard properties.

\begin{definition}[Riemann integral]
A \emph{tagged partition} of $[a,b]$ is a sequence $a = t_0 < t_1 < \cdots < t_n = b$ together with tags $\xi_k \in [t_k, t_{k+1}]$. Its \emph{mesh} is $\max_k (t_{k+1} - t_k)$. The \emph{Riemann sum} of a function $f \colon [a,b] \to \R$ is
\[
S(f, P) \coloneqq \sum_{k=0}^{n-1} f(\xi_k)(t_{k+1} - t_k).
\]
We say $f$ is \emph{Riemann integrable} on $[a,b]$ with integral $I$ if for every $\eps > 0$ there is $\delta > 0$ such that $|S(f, P) - I| < \eps$ for every tagged partition $P$ of mesh less than $\delta$. We then write $\int_a^b f(t)\dd t = I$.
\end{definition}

We extend the integral to $a > b$ by the convention $\int_a^b f(t)\dd t = -\int_b^a f(t)\dd t$, and define $\int_a^a f(t)\dd t = 0$.

The basic properties of the Riemann integral are entirely standard and we recall them without proof; a treatment can be found in \cite{rudin} or any introductory real-analysis text.

\begin{proposition}\label{prop:standard}
If $f$ is continuous function on $[a,b]$, then $f$ is Riemann integrable on $[a,b]$. Now let $a, b, c, \alpha, \beta \in \R$, and let $f$ and $g$ be integrable functions. The Riemann integral satisfies the following properties.
\begin{enumerate}
\item \textup{(Linearity.)} We have
\[
\int_a^b \bigl(\alpha f(t) + \beta g(t)\bigr)\dd t = \alpha \int_a^b f(t)\dd t + \beta \int_a^b g(t)\dd t.
\]
\item \textup{(Additivity.)} We have
\[
\int_a^c f(t)\dd t = \int_a^b f(t)\dd t + \int_b^c f(t)\dd t.
\]
\item \textup{(Monotonicity.)} If $f \le g$ pointwise on $[a,b]$ and both are integrable, then 
\[
\int_a^b f(t)\dd t \le \int_a^b g(t)\dd t.
\]
In particular, 
\[
\Bigl|\int_a^b f(t)\dd t\Bigr| \le \int_a^b |f(t)|\dd t.
\]
\item \textup{(Continuity in the upper limit.)} If $f$ is bounded by $M$, the function $F(x) = \int_a^x f(t)\dd t$ is Lipschitz continuous on $[a,b]$ with constant $M$.
\end{enumerate}
\end{proposition}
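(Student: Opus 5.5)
Since the paper recalls this proposition without proof, the plan is to verify each clause directly from the definition of the Riemann integral, using only uniform continuity on compact intervals and completeness of $\R$. For integrability of a continuous $f$ on $[a,b]$, I will use a Cauchy criterion for Riemann sums. By uniform continuity, given $\eps>0$ choose $\delta$ with $|f(s)-f(t)|<\eps/(b-a)$ whenever $|s-t|<\delta$. Take two tagged partitions $P,Q$ of mesh less than $\delta$ and pass to their common refinement $R$ (union of the points, any tags). On each subinterval of $R$, the tag of $P$ covering it lies within $\delta$ of the tag of $R$, so $|S(f,P)-S(f,R)|<\eps$, and likewise for $Q$. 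Hence $|S(f,P)-S(f,Q)|<2\eps$. Picking partitions $P_n$ with mesh tending to $0$, the sums $S(f,P_n)$ form a Cauchy sequence. Its limit $I$ exists by completeness, and the Cauchy estimate shows every partition of small mesh has Riemann sum near $I$. This is the step I expect to be the main obstacle: all the bookkeeping lives in comparing the two partitions through their common refinement, and everything else is formal.

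Linearity follows because Riemann sums are linear, $S(\alpha f+\beta g,P)=\alpha S(f,P)+\beta S(g,P)$. Choosing $\delta$ as the minimum of the two deltas for $f$ and $g$ gives the claim. For monotonicity, $f\le g$ gives $S(f,P)\le S(g,P)$ for every $P$, and the inequality passes to the limit. The absolute-value bound then follows from $-|f|\le f\le |f|$, assuming $|f|$ is integrable; this is automatic in the continuous case, which is the only case the paper uses.

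For additivity, first treat $a<b<c$. Restrict attention to partitions of $[a,c]$ that contain $b$ as a node. Such a sum splits as a sum over $[a,b]$ plus a sum over $[b,c]$, each of small mesh, so it approximates $\int_a^b f+\int_b^c f$. Since the integral over $[a,c]$ exists and is approached by all fine partitions, in particular by these, the identity holds. (For merely integrable $f$ one must also know integrability on subintervals; I would cite the standard Cauchy-criterion argument, or restrict to continuous $f$.) The remaining orderings of $a,b,c$, including coincidences, reduce to this case via the conventions $\int_a^b=-\int_b^a$ and $\int_a^a=0$, by a short case check.

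Finally, for the Lipschitz estimate, additivity gives $F(y)-F(x)=\int_x^y f(t)\dd t$. Monotonicity applied to $-M\le f\le M$ then gives $|F(y)-F(x)|\le M|y-x|$, since the integral of a constant $c$ over $[x,y]$ is $c(y-x)$: every Riemann sum of a constant equals exactly that value.
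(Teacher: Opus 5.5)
Your argument is correct. It is complete for continuous integrands, which is the paper's standing convention, and linearity and monotonicity go through even for general integrable $f,g$. The two facts you defer, integrability on subintervals and integrability of $|f|$, are exactly the ones the general ``integrable'' hypothesis needs.

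The paper gives no proof and simply points to \cite{rudin}, where the route is different. There, integrability is defined through Darboux upper and lower sums $U(f,P)$ and $L(f,P)$. Continuity gives $U(f,P)-L(f,P)\le\eta(b-a)$ once the mesh is below the uniform-continuity $\delta$ for $\eta$. The listed properties, including integrability of $|f|$ and of $f$ on subintervals, are then read off from the sup/inf description.

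You instead stay with tagged sums and the mesh-limit definition the paper actually uses. You prove a Cauchy criterion by comparing two fine partitions through their common refinement, which is in effect \Cref{lem:tag-perturb} applied across a refinement, and then invoke completeness.

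\emph{What your route buys.} It is faithful to the paper's definition: using Rudin requires a bridge between his Darboux definition and the mesh-limit one, for instance the fact that every tagged sum lies between $L(f,P)$ and $U(f,P)$. It is also economical, since a single uniform-continuity estimate drives both this proposition and \Cref{lem:tag-perturb}.

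\emph{What the Darboux route buys.} It handles the general integrable case cheaply. Your own machinery already closes the subinterval gap, though. Suppose $f$ is integrable on $[a,c]$ and $a<b<c$. Pad any two fine tagged partitions of $[a,b]$ with one fixed fine tagged partition of $[b,c]$. The padded sums differ by exactly the original difference, so the Cauchy condition passes to $[a,b]$. Your completeness step never used continuity, so it then gives integrability on $[a,b]$.

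Integrability of $|f|$ is where upper and lower sums are genuinely more convenient. The oscillation of $|f|$ on any cell is at most that of $f$, so smallness of $U-L$ transfers directly from $f$ to $|f|$.
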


We will repeatedly need two further consequences of continuity. The first is the integral version of the mean value theorem.

\begin{lemma}[Mean value theorem for integrals]\label{lem:MVT-int}
If $f$ is continuous on $[a,b]$, there exists $\xi \in [a,b]$ with
\[
\int_a^b f(t)\dd t = f(\xi)\,(b-a).
\]
\end{lemma}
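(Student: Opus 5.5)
We use the extreme value theorem together with monotonicity, and then the intermediate value theorem. Dispose first of the degenerate case $a = b$: both sides vanish, and any $\xi = a$ works. So assume $a < b$. Since $f$ is continuous on the compact interval $[a,b]$, the extreme value theorem gives points $p, q \in [a,b]$ with $m \coloneqq f(p) \le f(t) \le f(q) \eqqcolon M$ for all $t \in [a,b]$.

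Next, bound the integral between these values. By \cref{prop:standard}, $f$ is integrable, and constants are integrable. Monotonicity applied to $m \le f \le M$ then gives
\[
m(b-a) \le \int_a^b f(t)\dd t \le M(b-a).
\]
Here we also need $\int_a^b c\dd t = c(b-a)$. This is immediate from the definition, since every Riemann sum of a constant $c$ telescopes to $c(b-a)$. Dividing by $b-a > 0$, the average value
\[
\mu \coloneqq \frac{1}{b-a}\int_a^b f(t)\dd t
\]
lies in $[m, M] = [f(p), f(q)]$.

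Finally, apply the intermediate value theorem to $f$ on the closed interval with endpoints $p$ and $q$; take it as the single point if $p = q$. This interval is contained in $[a,b]$, and $f$ takes the values $f(p)$ and $f(q)$ at its ends, so it produces $\xi$ between $p$ and $q$ with $f(\xi) = \mu$. Multiplying back by $b-a$ gives the claim.

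No step here is a genuine obstacle. The only points needing care are that $p$ and $q$ may occur in either order, so the intermediate value theorem must be applied on the interval between them, and that $a = b$ must be handled separately to avoid dividing by zero.
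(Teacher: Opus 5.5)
Your proof is correct and follows essentially the same route as the paper: monotonicity traps the average value between $\min f$ and $\max f$, and the intermediate value theorem then produces $\xi$. Your separate handling of $a=b$, of $\int_a^b c\,dt = c(b-a)$, and of the ordering of the extremal points $p,q$ adds details the paper leaves implicit.
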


\begin{proof}
Let $m = \min_{[a,b]} f$ and $M = \max_{[a,b]} f$. Monotonicity gives
\[
m(b-a) \le \int_a^b f(t)\dd t \le M(b-a),
\]
so the average value $\frac{1}{b-a}\int_a^b f(t)\dd t$ lies in $[m, M]$. By the intermediate value theorem, $f$ attains this value at some point $\xi \in [a, b]$.
\end{proof}

The second is a tag-perturbation lemma that says we may swap one tag for another inside a Riemann sum at vanishing cost.

\begin{lemma}[Riemann sums tolerate small perturbation of tags]\label{lem:tag-perturb}
Let $f$ be continuous on $[a,b]$ and $\eps > 0$. There is $\delta > 0$ such that, for any partition $a = t_0 < \cdots < t_n = b$ of mesh less than $\delta$ and any choices of tags $\xi_k, \xi_k' \in [t_k, t_{k+1}]$,
\[
\left| \sum_{k=0}^{n-1} \bigl(f(\xi_k) - f(\xi_k')\bigr)(t_{k+1} - t_k) \right| < \eps(b-a).
\]
\end{lemma}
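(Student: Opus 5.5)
The plan is to use uniform continuity of $f$ on the compact interval $[a,b]$, which is among the stated prerequisites. Given $\eps > 0$, we choose $\delta > 0$ so that $|f(x) - f(y)| < \eps$ whenever $x, y \in [a,b]$ and $|x - y| < \delta$. We then show that this same $\delta$ serves in the statement.

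Fix a partition $a = t_0 < \cdots < t_n = b$ of mesh less than $\delta$, and fix tags $\xi_k, \xi_k' \in [t_k, t_{k+1}]$. For each $k$, both tags lie in the same subinterval, so
\[
|\xi_k - \xi_k'| \le t_{k+1} - t_k < \delta,
\]
and hence $|f(\xi_k) - f(\xi_k')| < \eps$. Next we apply the triangle inequality to the sum, using that each $t_{k+1} - t_k > 0$:
\[
\left| \sum_{k=0}^{n-1} \bigl(f(\xi_k) - f(\xi_k')\bigr)(t_{k+1} - t_k) \right|
\le \sum_{k=0}^{n-1} |f(\xi_k) - f(\xi_k')|\,(t_{k+1} - t_k)
< \eps \sum_{k=0}^{n-1} (t_{k+1} - t_k) = \eps(b-a).
\]

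The only delicate point is that the final inequality must be strict. This holds because $n \ge 1$ and every term of the middle sum is strictly smaller than the corresponding term $\eps(t_{k+1} - t_k)$. The sum of widths then telescopes to $b - a$, which gives the bound in the statement. No other obstacle arises; the lemma is essentially a restatement of uniform continuity.
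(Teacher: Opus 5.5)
Your proof is correct and follows the paper's argument: you take $\delta$ from uniform continuity of $f$ on $[a,b]$, note that both tags lie in a cell of width less than $\delta$, and finish with the triangle inequality. Your added remark on why the final inequality is strict (each of the $n \ge 1$ terms is strictly smaller because every cell has positive width) supplies a detail the paper leaves implicit.
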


\begin{proof}
By uniform continuity of $f$ on the compact interval $[a,b]$, choose $\delta$ so that $|s - t| < \delta$ implies $|f(s) - f(t)| < \eps$. Then $|f(\xi_k) - f(\xi_k')| < \eps$ for every $k$ and the bound follows by the triangle inequality.
\end{proof}

\begin{convention}
Throughout, all integrands are assumed continuous on the intervals over which we integrate. Where we write a one-sided integral such as $\int_0^1 dt/\sqrt{1-t^2}$ that has a singularity at an endpoint, we mean the improper integral, computed as a limit as the endpoint is approached.
\end{convention}

\section{Direct verification of classical integrals}\label{sec:direct}

We come to the main work of the integral side, the computation by hand and from first principles of the integrals
\[
\int \frac{\dd t}{t}, \quad \int e^t\dd t, \quad \int b^t\dd t, \quad \int t^n\dd t, \quad \int \cos(t)\dd t, \quad \int \sin(t)\dd t, \quad \int \sec^2(t)\dd t, \quad \int \csc^2(t)\dd t.
\]
The integral-first program has substantial precedent; a modern textbook treatment along these lines is given in Apostol \cite{apostol}, and several of the direct evaluations below date back to the seventeenth century, most notably Fermat's geometric-partition evaluation of $\int t^n\dd t$, for which see \cite{edwards}. Each verification is a direct manipulation of Riemann sums or a telescoping identity. None invokes a derivative or an antiderivative. The arguments fall into two general flavors. The first, which handles $1/t$, $b^t$, and the trigonometric pair $\sin(t)$ and $\cos(t)$, is a geometric-series argument. The second, which handles $\sec^2(t)$ and $\csc^2(t)$, is telescoping with a known multiplier. The polynomial integrand $t^n$ submits to a separate Faulhaber-style summation, or a functional equation argument when $n = -1$.

\subsection{The reciprocal integrand and the natural logarithm}\label{sec:log}

We begin by defining the natural logarithm.

\begin{definition}\label{def:log}
For $x > 0$, the \emph{natural logarithm} is
\[
\log(x) \coloneqq \int_1^x \frac{\dd t}{t}.
\]
\end{definition}

This is unmotivated for the moment, in the sense that it is not obvious from the definition that $\log$ has anything to do with logarithms in the usual sense. The next several results justify the name. First, an explicit limit form.

\begin{lemma}\label{lem:log-geometric}
For every $x > 0$,
\[
\log(x) = \lim_{n \to \infty} n\bigl(x^{1/n} - 1\bigr).
\]
\end{lemma}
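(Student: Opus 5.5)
We will prove this with a Riemann sum over a geometric partition. Fix $x > 0$ and $n \ge 1$, and set $r = x^{1/n}$. We use the partition $t_k = r^k$ for $k = 0, \dots, n$, running from $t_0 = 1$ to $t_n = x$. If $x > 1$ this is a partition of $[1,x]$. If $x < 1$ it is a decreasing sequence, and we read it as a partition of $[x,1]$, taking the sign convention $\int_1^x = -\int_x^1$ into account. The case $x = 1$ is trivial, since both sides vanish.

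For $x > 1$ the mesh is $\max_k r^k(r-1) = r^{n-1}(r-1) \le x(x^{1/n}-1)$. This tends to $0$ as $n \to \infty$ because $x^{1/n} \to 1$. That limit is elementary: for $x>1$, Bernoulli's inequality gives $x^{1/n} - 1 \le (x-1)/n$.

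The key computation is to tag each subinterval at its left endpoint, $\xi_k = t_k$. Then every term of the Riemann sum is identical:
\[
\sum_{k=0}^{n-1} \frac{1}{r^k}\bigl(r^{k+1} - r^k\bigr) = \sum_{k=0}^{n-1} (r - 1) = n\bigl(x^{1/n} - 1\bigr).
\]
Because $1/t$ is continuous on $[1,x]$, it is Riemann integrable by \cref{prop:standard}. So any sequence of tagged partitions whose mesh tends to $0$ has Riemann sums converging to $\int_1^x dt/t = \log(x)$. Applying this to our sequence gives the claim for $x > 1$.

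For $0 < x < 1$ we run the same argument on $[x,1]$. The points are $x = r^n < r^{n-1} < \cdots < r^0 = 1$ with $r < 1$, and the subintervals are $[r^{k+1}, r^k]$. Tagging at the right endpoint $r^k$ makes each term $(r^k - r^{k+1})/r^k = 1 - r$. The Riemann sum is therefore $n(1 - x^{1/n})$, which converges to $\int_x^1 dt/t = -\log(x)$. Negating gives the statement. The mesh here is at most $1 - x^{1/n}$, and this tends to $0$ by applying the $x>1$ bound to $1/x$.

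The only genuine obstacle is justifying that the mesh goes to zero, that is, $x^{1/n} \to 1$. Once that is established, the proof reduces to the observation that geometric partitions make every summand equal. No tag-perturbation lemma is needed, because the integrability definition allows arbitrary tags.
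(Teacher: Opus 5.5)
Your proof is correct and is essentially the paper's argument: the geometric partition $x^{k/n}$ makes every left-endpoint summand equal to $x^{1/n}-1$, and Bernoulli's inequality supplies $x^{1/n}\to 1$. The only difference is the closing step. The paper squeezes $\log(x)$ between the left- and right-endpoint sums using monotonicity and shows the gap $n(x^{1/n}-1)^2/x^{1/n}$ vanishes, so it needs no mesh estimate and also gets the right-endpoint form used later in \Cref{thm:log-fe}; you instead apply the mesh-based definition of the integral directly, which is equally valid.
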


\begin{proof}
Suppose first $x > 1$. Partition $[1, x]$ geometrically by setting $u_k = x^{k/n}$ for $k = 0, 1, \ldots, n$. The widths are $u_{k+1} - u_k = x^{k/n}(x^{1/n} - 1)$, and on the cell $[u_k, u_{k+1}]$ the integrand $1/u$ satisfies $1/u_{k+1} \le 1/u \le 1/u_k$. Hence
\[
\sum_{k=0}^{n-1} \frac{x^{k/n}(x^{1/n}-1)}{u_{k+1}}
\;\le\; \int_1^x \frac{\dd u}{u} \;\le\;
\sum_{k=0}^{n-1} \frac{x^{k/n}(x^{1/n}-1)}{u_k}.
\]
The right sum simplifies term by term to $x^{1/n} - 1$, totaling $n(x^{1/n} - 1)$. The left sum simplifies to $(x^{1/n}-1)/x^{1/n}$ per term, totaling $n(x^{1/n}-1)/x^{1/n}$. Bernoulli's inequality gives $x = (1 + (x^{1/n} - 1))^n \ge 1 + n(x^{1/n} - 1)$, hence $x^{1/n} - 1 \le (x-1)/n$, which forces $x^{1/n} \to 1$. The gap between the two bounds is
\[
n(x^{1/n}-1) - \frac{n(x^{1/n}-1)}{x^{1/n}} = n(x^{1/n}-1) \cdot \frac{x^{1/n}-1}{x^{1/n}} \le \frac{(x-1)^2}{n \cdot x^{1/n}},
\]
which tends to $0$ since $x^{1/n} \to 1$. Since the integral $\log(x) = \int_1^x \dd u/u$ is sandwiched between the two bounds and the gap vanishes, both bounds converge to $\log(x)$; in particular $n(x^{1/n} - 1) \to \log(x)$.

For $0 < x < 1$, run the same partition in reverse on $[x, 1]$; the bounds are analogous and the conclusion follows from the convention $\int_1^x = -\int_x^1$.
\end{proof}

The point of calling this function the logarithm is the following functional equation, which Cauchy \cite{cauchy-cours} proved is the defining property of any logarithm; for a modern exposition see \cite{aczel}.

\begin{theorem}[Functional equation]\label{thm:log-fe}
For all $x, y > 0$,
\[
\log(xy) = \log(x) + \log(y).
\]
\end{theorem}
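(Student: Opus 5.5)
The cleanest route uses the limit form in \cref{lem:log-geometric}. For $x, y > 0$ the identity
\[
n\bigl((xy)^{1/n} - 1\bigr) = n\bigl(x^{1/n}-1\bigr)\,y^{1/n} + n\bigl(y^{1/n}-1\bigr)
\]
holds, since $(xy)^{1/n} = x^{1/n}y^{1/n}$ for positive reals. We let $n \to \infty$ on both sides.

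The left side tends to $\log(xy)$ by \cref{lem:log-geometric}. The second term on the right tends to $\log(y)$, again by that lemma. The first term on the right is a product. Its first factor tends to $\log(x)$, so it remains to show $y^{1/n} \to 1$.

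For $y \ge 1$ this was already shown inside the proof of \cref{lem:log-geometric}: Bernoulli's inequality gives $0 \le y^{1/n} - 1 \le (y-1)/n$. For $0 < y < 1$ we write $y^{1/n} = 1/(1/y)^{1/n}$ and apply the same bound to $1/y > 1$, which gives $(1/y)^{1/n} \to 1$ and hence $y^{1/n} \to 1$.

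The limit of a product is the product of the limits, so the first term tends to $\log(x)\cdot 1$. Summing the limits gives $\log(xy) = \log(x) + \log(y)$.

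The only delicate point is that \cref{lem:log-geometric} covers every $x > 0$, including $0<x<1$ and trivially $x=1$. I rely on it as stated, so no case split is needed except in the elementary check that $y^{1/n}\to 1$, handled above.

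A backup plan, if one prefers to stay with integrals, is the scaling substitution. Using geometric or uniform Riemann sums, one shows $\int_x^{xy} dt/t = \int_1^y du/u$: a tagged partition $\{t_k,\xi_k\}$ of $[1,y]$ maps to the partition $\{xt_k, x\xi_k\}$ of $[x,xy]$ with identical Riemann sums for $1/t$. Additivity from \cref{prop:standard} then finishes the argument. The limit-form argument is shorter, so it is the one I would write.
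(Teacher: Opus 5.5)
Your argument is correct, and it takes a genuinely different route from the paper's.

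The paper uses additivity to write $\log(xy) = \log(x) + \int_x^{xy} dt/t$. It then evaluates the second piece with the geometric partition $v_k = xy^{k/n}$ of $[x,xy]$. The factor $x$ cancels between cell widths and integrand values, so the endpoint sums are exactly $n(y^{1/n}-1)$ and $n(y^{1/n}-1)/y^{1/n}$, and \Cref{lem:log-geometric} identifies their common limit as $\log(y)$.

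You never return to the integral. You apply \Cref{lem:log-geometric} to $x$, $y$, and $xy$, and let the multiplicativity of positive $n$-th roots do the work through the exact identity $n((xy)^{1/n}-1) = n(x^{1/n}-1)\,y^{1/n} + n(y^{1/n}-1)$.

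What each route buys:
\begin{itemize}
\item Your route is shorter and needs neither additivity nor a mesh argument. It also shows that the functional equation is already encoded in the limit formula.
\item The paper's route produces the intermediate dilation-invariance statement $\int_x^{xy} dt/t = \int_1^y dt/t$, which the paper singles out as the geometric source of the functional equation. That is precisely your backup plan. Your version is slightly more general, since you dilate arbitrary tagged partitions where the paper uses one geometric family.
\end{itemize}

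One small simplification: you do not need Bernoulli's inequality or a case split to get $y^{1/n}\to 1$. Since $y^{1/n}-1 = \tfrac{1}{n}\cdot n(y^{1/n}-1) \to 0\cdot\log(y) = 0$ by the lemma itself, this handles every $y>0$ at once.
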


\begin{proof}
By additivity of the integral, 
\[
\log(xy) = \int_1^{xy} dt/t = \int_1^x dt/t + \int_x^{xy} dt/t = \log(x) + \int_x^{xy} dt/t.
\]
It remains to show that the second piece equals $\log(y)$.

Partition $[x, xy]$ geometrically by $v_k = x \cdot y^{k/n}$ for $k = 0, \ldots, n$, assuming $y > 1$ (the case $y < 1$ is symmetric). Each cell width is $xy^{k/n}(y^{1/n} - 1)$. The right-endpoint Riemann sum for $\int_x^{xy} dt/t$ is
\[
\sum_{k=0}^{n-1} \frac{xy^{k/n}(y^{1/n}-1)}{xy^{(k+1)/n}} = \sum_{k=0}^{n-1} \frac{y^{1/n}-1}{y^{1/n}} = \frac{n(y^{1/n}-1)}{y^{1/n}},
\]
which tends to $\log(y)$ by \Cref{lem:log-geometric}. The left-endpoint Riemann sum gives $n(y^{1/n}-1)$ by the same calculation, also tending to $\log(y)$. The integrand $1/u$ is continuous and the partition mesh tends to $0$, so the Riemann sums converge to the integral, which therefore equals $\log(y)$.
\end{proof}

The crucial observation is that $\int_x^{xy} dt/t$ depends only on $y$, not on $x$. Geometrically, dilating the interval $[1, y]$ by a factor of $x$ leaves the integral of $1/t$ unchanged, because the dilation widens each subinterval by a factor of $x$ and shrinks the integrand by the same factor. This scale-invariance is the source of the logarithmic functional equation.

\begin{corollary}\label{cor:log-properties}
The function $\log\colon (0, \infty) \to \R$ is continuous, strictly increasing, satisfies $\log(1) = 0$ and $\log(1/x) = -\log(x)$, satisfies $\log(x^n) = n\log(x)$ for all $n \in \Z$, and is a bijection onto $\R$.
\end{corollary}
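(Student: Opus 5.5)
Continuity comes directly from \Cref{prop:standard}(4). On any compact interval $[c,d]\subset(0,\infty)$ the integrand $1/t$ is bounded by $1/c$, so $x\mapsto\int_1^x dt/t$ is Lipschitz there with constant $1/c$. Hence $\log$ is continuous on all of $(0,\infty)$. The value $\log(1)=0$ holds by the convention $\int_1^1=0$.

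For the algebraic identities I apply \Cref{thm:log-fe} repeatedly. Taking $y=1/x$ gives $0=\log(1)=\log(x)+\log(1/x)$, so $\log(1/x)=-\log(x)$. Induction on $n\ge 0$ using $\log(x^{n+1})=\log(x^n)+\log(x)$ gives $\log(x^n)=n\log(x)$ for nonnegative $n$. The negative case then follows from $x^{-n}=1/x^n$ together with the reciprocal identity.

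Strict monotonicity also comes from the functional equation. If $0<x<y$, then $\log(y)-\log(x)=\log(y/x)$ by \Cref{thm:log-fe}, so it suffices to show $\log(z)>0$ for $z>1$. Monotonicity in \Cref{prop:standard}(3) gives $\int_1^z dt/t\ge (z-1)/z>0$, since $1/t\ge 1/z$ on $[1,z]$. Alternatively, \Cref{lem:MVT-int} gives $\log(z)=(z-1)/\xi$ for some $\xi\in[1,z]$, which is again positive.

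The main step is surjectivity onto $\R$, which requires showing that $\log$ is unbounded in both directions. Fix $z=2$, so $\log 2>0$ by the previous paragraph. Then $\log(2^n)=n\log 2\to\infty$ as $n\to\infty$ and $\log(2^{-n})=-n\log 2\to-\infty$. Given any $r\in\R$, choose $n$ with $-n\log2<r<n\log2$. The intermediate value theorem, applied to the continuous function $\log$ on $[2^{-n},2^n]$, produces some $x$ with $\log(x)=r$. Injectivity follows from strict monotonicity, so $\log$ is a bijection $(0,\infty)\to\R$.
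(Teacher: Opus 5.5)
Your proof is correct and follows essentially the same route as the paper. You get continuity from \Cref{prop:standard}(4), which you cite correctly, whereas the paper's proof points to a nonexistent item (5); the reciprocal and power identities come from \Cref{thm:log-fe} plus induction, and surjectivity from $\log(2)>0$, $\log(2^{\pm n})=\pm n\log(2)$, and the intermediate value theorem. The only differences are cosmetic: you read $\log(1)=0$ directly off the convention $\int_1^1=0$ rather than out of the functional equation, and you prove strict monotonicity by reducing to $\log(z)>0$ for $z>1$ with the explicit bound $(z-1)/z$, which makes rigorous the paper's brief appeal to ``positivity of the integrand.''
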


\begin{proof}
Continuity is \Cref{prop:standard}(5). Monotonicity comes from positivity of the integrand, since for $a < b$, $\log(b) - \log(a) = \int_a^b dt/t > 0$. From the functional equation, $\log(x \cdot 1) = \log(x) + \log(1)$, so $\log(1) = 0$, and
\[
\log(x) + \log(1/x) = \log(1) = 0
\]
gives $\log(1/x) = -\log(x)$. Induction on the functional equation gives $\log(x^n) = n\log(x)$ for $n \ge 0$, and the result for $n < 0$ follows by inverting. Surjectivity comes from $\log(2) > 0$ (positive integrand on $[1, 2]$), since then $\log(2^n) = n\log(2) \to \infty$ and $\log(2^{-n}) \to -\infty$, and the intermediate value theorem fills in the rest.
\end{proof}

Having established that $\log$ is a continuous bijection from $(0, \infty)$ to $\R$, we may define its inverse.

\begin{definition}\label{def:exp}
The \emph{exponential function} $\exp \colon \R \to (0, \infty)$ is the inverse of $\log$. We set $e \coloneqq \exp(1)$, characterized by $\log(e) = 1$. For any $b > 0$, we define
\[
b^x = \exp(x \log(b)).
\]
\end{definition}

The third part of the definition extends the operation of raising a positive real to a real power. For integer or rational exponents the definition agrees with the elementary one. For $n \in \Z$ we have $\log(b^n) = n\log(b)$ from the functional equation, so $\exp(n \log(b)) = b^n$. For rational $p/q$ with $q > 0$, the relation $(b^{1/q})^q = b$ gives $q \log(b^{1/q}) = \log(b)$, hence $b^{1/q} = \exp(\log(b)/q)$, and the rest of the rationals follow by combining these two arguments. For irrational exponents, the definition is the natural continuous extension.

The next lemma is the analytic ingredient that makes the exponential integral come out right.

\begin{lemma}\label{lem:exp-derivative-at-0}
$\displaystyle \lim_{h \to 0} \frac{e^h - 1}{h} = 1$.
\end{lemma}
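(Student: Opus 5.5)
We will prove the limit by substituting $u = e^h - 1$, which converts the quotient into a statement about $\log$ near $1$, and then squeezing $\log(1+u)$ between simple bounds coming from monotonicity of the integral.

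\textbf{Step 1: bounds on the logarithm.} For $u > -1$, $u \neq 0$, we have $\log(1+u) = \int_1^{1+u} \dd t/t$ by \Cref{def:log}. On the interval between $1$ and $1+u$, the integrand $1/t$ lies between $1$ and $1/(1+u)$. Monotonicity in \Cref{prop:standard} (or \Cref{lem:MVT-int} directly) then gives
\[
\frac{u}{1+u} \le \log(1+u) \le u .
\]
This holds for $u>0$ directly. For $-1<u<0$ the orientation flips, but the same two inequalities hold once the sign of $u$ is tracked. Equivalently, $\log(1+u) = u/\eta$ for some $\eta$ between $1$ and $1+u$.

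\textbf{Step 2: substitution.} Set $u = e^h - 1 = \exp(h) - 1$, so that $h = \log(1+u)$ by \Cref{def:exp}. Since $\exp$ is a strictly increasing bijection, $h \neq 0$ implies $u \neq 0$. From the mean value form in Step 1,
\[
\frac{e^h - 1}{h} = \frac{u}{\log(1+u)} = \eta,
\]
with $\eta$ between $1$ and $e^h$.

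\textbf{Step 3: continuity of $\exp$ at $0$.} It remains to show $e^h \to 1$ as $h \to 0$. This is the main obstacle, because continuity of $\exp$ has not yet been stated. We would argue it from the fact that $\log$ is a continuous, strictly increasing bijection by \Cref{cor:log-properties}. The inverse of a strictly monotone surjection between intervals is monotone with no jumps, because its image is an interval, and is therefore continuous.

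Alternatively, there is a direct route that avoids the inverse-function fact. Step 1 gives $h = \log(1+u) \ge u/(1+u)$, so $u(1-h) \le h$ and hence $|u| \le |h|/(1-|h|)$ for $|h|<1$. To check this, handle $u>0$ and $u<0$ separately using the two inequalities of Step 1. For $u<0$, the bound $h \le u$ already gives $|u|\le|h|$. Thus $u \to 0$ as $h \to 0$.

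\textbf{Conclusion.} By Step 3, $\eta$ is squeezed between $1$ and a quantity tending to $1$, so $\eta \to 1$. By Step 2, this proves $\lim_{h\to 0} (e^h-1)/h = 1$.
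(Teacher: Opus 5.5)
Your proof is correct and follows the paper's route: substitute $u = e^h - 1$, apply the integral mean value theorem to $1/t$ between $1$ and $1+u$ to write the quotient in terms of a point between $1$ and $e^h$, and conclude from $u \to 0$. Your alternative bound $|u| \le |h|/(1-|h|)$, obtained from $u/(1+u) \le \log(1+u) \le u$, is a correct self-contained replacement for the paper's appeal to continuity of the inverse of the continuous strictly monotone $\log$.
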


\begin{proof}
Set $u = e^h - 1$. As $h \to 0$, $u \to 0$ as well, by continuity of $\exp$ at $0$ (and $\exp$ is continuous because it is the inverse of the continuous strictly monotone $\log$). Now
\[
\log(1 + u) = \log(e^h) = h.
\]
By \Cref{lem:MVT-int} applied to $1/t$ on the interval between $1$ and $1 + u$, we have
\[
h = \log(1 + u) = u/\xi
\]
for some $\xi$ between $1$ and $1 + u$. Hence $h/u = 1/\xi$, which tends to $1$ as $u \to 0$ by continuity. Inverting, we conclude $u/h \to 1$.
\end{proof}

\begin{theorem}\label{thm:int-exp}
For all $p, q \in \R$ and $b > 0$ with $b \neq 1$, we have
\[
\int_p^q b^t\dd t = \frac{b^q - b^p}{\log(b)}.
\]
In particular, we have
\[
\int_p^q e^t\dd t = e^q - e^p,
\]
\end{theorem}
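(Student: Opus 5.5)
The plan is a geometric-series evaluation with a uniform partition, as the introduction promises. First I reduce to $p < q$: the case $p = q$ is trivial, and the case $p > q$ follows from the convention $\int_p^q = -\int_q^p$ because the right-hand side is also antisymmetric in $(p,q)$. The integrand $b^t = \exp(t\log b)$ is continuous, since $\exp$ is continuous as the inverse of a continuous strictly monotone function. By \Cref{prop:standard} it is therefore integrable, so it suffices to compute the limit of one convenient family of Riemann sums whose mesh tends to $0$.

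Take the uniform partition $t_k = p + kh$ with $h = (q-p)/n$, and use left-endpoint tags. From the definition $b^x = \exp(x\log b)$ and the functional equation (\Cref{thm:log-fe}, transported to $\exp(x+y) = \exp(x)\exp(y)$), I get $b^{p+kh} = b^p (b^h)^k$. The Riemann sum is then a finite geometric series with ratio $r = b^h \neq 1$:
\[
S_n = h\sum_{k=0}^{n-1} b^p r^k = h\, b^p\,\frac{r^n - 1}{r - 1} = (b^q - b^p)\,\frac{h}{b^h - 1},
\]
using $r^n = b^{nh} = b^{q-p}$.

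The remaining step is to show $h/(b^h - 1) \to 1/\log(b)$ as $h \to 0$. Write $b^h - 1 = e^{h\log b} - 1$ and set $s = h\log b$. Since $b \neq 1$, we have $\log b \neq 0$ by strict monotonicity of $\log$ and $\log(1) = 0$, so $s \to 0$ with $s \neq 0$. Then
\[
\frac{h}{b^h - 1} = \frac{1}{\log b}\cdot\frac{s}{e^s - 1} \to \frac{1}{\log b}
\]
by \Cref{lem:exp-derivative-at-0}, where $e^s$ means $\exp(s)$, consistent with \Cref{def:exp} because $\log(e) = 1$. Hence $S_n \to (b^q - b^p)/\log(b)$, and since the Riemann sums converge to the integral, this value is the integral.

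The special case follows from $\log(e) = 1$. The main obstacle, mild as it is, is justifying the identities $b^{x+y} = b^x b^y$ and $b^{nh} = (b^h)^n$ from the definition via $\exp$ and the functional equation. That limit step is exactly what \Cref{lem:exp-derivative-at-0} was prepared for, so once the exponent laws are in hand the argument is straightforward.
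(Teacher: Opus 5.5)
Your proposal is correct and follows the paper's proof. A uniform partition turns the Riemann sum into the geometric series $(b^q - b^p)\cdot h/(b^h - 1)$, and the substitution $s = h\log(b)$ reduces the limit to \Cref{lem:exp-derivative-at-0}. The differences are cosmetic: you treat general $b$ first and specialize via $\log(e) = 1$, whereas the paper does $e$ first. You also spell out the orientation reduction, the integrability of $b^t$, and the exponent law $b^{x+y} = b^x b^y$, all of which the paper uses silently.
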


\begin{proof}
We begin with the second identity. Partition $[p, q]$ uniformly with $t_k = p + k\Delta$ where $\Delta = (q - p)/n$. The Riemann sum is the geometric series
\[
\Delta \sum_{k=0}^{n-1} e^{p + k\Delta}
\;=\; \Delta \cdot e^p \cdot \frac{e^{n\Delta} - 1}{e^\Delta - 1}
\;=\; (e^q - e^p)\cdot \frac{\Delta}{e^\Delta - 1}.
\]
As $n \to \infty$, $\Delta \to 0$, and \Cref{lem:exp-derivative-at-0} gives $\Delta/(e^\Delta - 1) \to 1$. The integral therefore equals $e^q - e^p$.

For the first identity, the same uniform partition gives the analogous Riemann sum
\[
\Delta \sum_{k=0}^{n-1} b^{p + k\Delta}
\;=\; (b^q - b^p) \cdot \frac{\Delta}{b^\Delta - 1}.
\]
Now $b^\Delta = e^{\Delta \log(b)}$ from \Cref{def:exp}, so writing $h = \Delta \log(b)$,
\[
\frac{\Delta}{b^\Delta - 1} = \frac{1}{\log(b)} \cdot \frac{h}{e^h - 1} \;\longrightarrow\; \frac{1}{\log(b)}
\]
as $\Delta \to 0$ (and hence $h \to 0$), again by \Cref{lem:exp-derivative-at-0}. The integral therefore equals $(b^q - b^p)/\log(b)$.
\end{proof}

\subsection{Polynomial integrands}

\begin{theorem}\label{thm:int-power}
For every nonnegative integer $n$ and every $x \in \R$, we have
\[
\int_0^x t^n\dd t = \frac{x^{n+1}}{n+1}.
\]
\end{theorem}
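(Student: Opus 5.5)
The paper signals a Faulhaber-style summation, but the cleanest route avoids exact power-sum formulas: I will sandwich the integral between left and right uniform Riemann sums and use only the leading-order behaviour of $\sum k^n$. Since $t^n$ is continuous, \Cref{prop:standard} gives integrability, so the integral is the limit of any sequence of Riemann sums whose mesh tends to $0$. It therefore suffices to evaluate a convenient sequence. I treat $x>0$ first; $x=0$ is trivial.

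For $x>0$, partition $[0,x]$ uniformly by $t_k = kx/n$ and use right-endpoint tags. The Riemann sum is
\[
\sum_{k=1}^{n} \Bigl(\frac{kx}{n}\Bigr)^n\cdot\frac{x}{n}\quad\text{(with exponent $n$ renamed $m$ to avoid clash)}\;=\;\frac{x^{m+1}}{n^{m+1}}\sum_{k=1}^{n} k^m .
\]
This reduces the claim to the single limit
\[
\frac{1}{n^{m+1}}\sum_{k=1}^n k^m \longrightarrow \frac{1}{m+1}.
\]

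To prove that limit, I use the telescoping identity
\[
\sum_{k=1}^n \bigl(k^{m+1}-(k-1)^{m+1}\bigr) = n^{m+1},
\]
together with a termwise bound from the mean-value-free inequality
\[
(m+1)(k-1)^m \le k^{m+1}-(k-1)^{m+1} \le (m+1)k^m .
\]
This inequality holds because $a^{m+1}-b^{m+1} = (a-b)\sum_{j=0}^{m} a^j b^{m-j}$, and each of the $m+1$ summands lies between $b^m$ and $a^m$ for $0\le b\le a$. Summing over $k$ gives
\[
(m+1)\sum_{k=1}^{n-1} k^m \le n^{m+1} \le (m+1)\sum_{k=1}^n k^m .
\]
Hence
\[
\frac{n^{m+1}}{m+1} \le \sum_{k=1}^n k^m \le \frac{n^{m+1}}{m+1} + n^m .
\]
Dividing by $n^{m+1}$, the two bounds differ by $1/n \to 0$, which yields the limit. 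Equivalently, the left- and right-endpoint sums bracket $x^{m+1}/(m+1)$ and differ by $x^{m+1}/n$. This is the same sandwich structure as in \Cref{lem:log-geometric}.

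For $x<0$, I run the same uniform partition on $[x,0]$, where $t_k = x + k|x|/n$, or more simply $t_k = -k|x|/n$ taken in reverse. The Riemann sum becomes $(-1)^m |x|^{m+1}n^{-(m+1)}\sum k^m \to (-1)^m|x|^{m+1}/(m+1)$, which is $\int_x^0 t^m\,dt$. The convention $\int_0^x = -\int_x^0$ then gives $-(-1)^m|x|^{m+1}/(m+1)$. Since $x^{m+1} = (-1)^{m+1}|x|^{m+1}$, this equals $x^{m+1}/(m+1)$.

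The only genuine step is the asymptotic $\sum k^m \sim n^{m+1}/(m+1)$. The factorization bound handles it cleanly without needing Faulhaber's exact coefficients. The sign bookkeeping for $x<0$ is the likeliest place for a slip, so I will check it against the case $m=0$.
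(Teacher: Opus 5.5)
Your proof is correct and follows essentially the paper's route. A uniform partition reduces the claim to $N^{-(n+1)}\sum_{k} k^n \to 1/(n+1)$, which comes from telescoping $\sum_{k}\bigl[k^{n+1}-(k-1)^{n+1}\bigr] = N^{n+1}$. The only variation is in bounding each increment: you use $a^{n+1}-b^{n+1}=(a-b)\sum_{j=0}^{n} a^j b^{n-j}$ two-sidedly, whereas the paper expands binomially and crudely bounds the lower-order power sums by $N^n$. This gives you the explicit sandwich $\frac{N^{n+1}}{n+1}\le\sum_{k=1}^N k^n\le\frac{N^{n+1}}{n+1}+N^n$ instead of an $O(N^n)$ error term, and you also carry out the $x<0$ sign check that the paper dismisses as symmetric.
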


\begin{proof}
Take $x > 0$; the negative case is symmetric. With uniform partition $t_k = kx/N$ for $k = 0, \ldots, N$, the left-endpoint Riemann sum is
\[
S_N = \frac{x}{N} \sum_{k=0}^{N-1} \left(\frac{kx}{N}\right)^n = \frac{x^{n+1}}{N^{n+1}} \sum_{k=0}^{N-1} k^n.
\]
We claim that $\sum_{k=0}^{N-1} k^n = N^{n+1}/(n+1) + O(N^n)$, which is the leading term of Faulhaber's formula \cite{knuth-faulhaber}. To see this, telescope the identity $(k+1)^{n+1} - k^{n+1} = \sum_{j=0}^n \binom{n+1}{j} k^j$ from $k = 0$ to $N - 1$ to get
\[
N^{n+1} = \sum_{j=0}^n \binom{n+1}{j} \sum_{k=0}^{N-1} k^j = (n+1) \sum_{k=0}^{N-1} k^n + \sum_{j=0}^{n-1} \binom{n+1}{j} \sum_{k=0}^{N-1} k^j.
\]
For each $j < n$, the inner sum $\sum_{k=0}^{N-1} k^j$ is at most $N \cdot N^j = N^{j+1} \le N^n$, so the second term on the right is $O(N^n)$. Solving, $\sum_{k=0}^{N-1} k^n = N^{n+1}/(n+1) + O(N^n)$. Substituting,
\[
S_N = \frac{x^{n+1}}{n+1} + O(1/N) \;\longrightarrow\; \frac{x^{n+1}}{n+1}. \qedhere
\]
\end{proof}

\subsection{Sine and cosine}\label{sec:int-sin-cos}

The combination $\cos(\theta) + i \sin(\theta)$ is the source of much trigonometric magic. We need de Moivre's formula, which is itself the inductive consequence of the angle-addition identities.

\begin{lemma}[de Moivre]\label{lem:demoivre}
For every $\theta \in \R$ and every nonnegative integer $n$, we have
\[
(\cos(\theta) + i\sin(\theta))^n = \cos(n\theta) + i\sin(n\theta).
\]
\end{lemma}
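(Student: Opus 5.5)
We argue by induction on $n$, using only the angle-addition identities for sine and cosine, which the paper takes as prerequisites, and the rule $i^2 = -1$ for multiplying complex numbers.

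For the base case $n = 0$, both sides equal $1$, since $\cos(0) = 1$ and $\sin(0) = 0$. The case $n = 1$ is immediate as well, although the induction does not need it separately.

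For the inductive step, assume the identity holds for $n$. Then
\[
(\cos(\theta) + i\sin(\theta))^{n+1} = \bigl(\cos(n\theta) + i\sin(n\theta)\bigr)\bigl(\cos(\theta) + i\sin(\theta)\bigr).
\]
Expanding the product and using $i^2 = -1$, the right side becomes
\[
\bigl(\cos(n\theta)\cos(\theta) - \sin(n\theta)\sin(\theta)\bigr) + i\bigl(\sin(n\theta)\cos(\theta) + \cos(n\theta)\sin(\theta)\bigr).
\]
We then invoke the addition formulas $\cos(\alpha+\beta) = \cos\alpha\cos\beta - \sin\alpha\sin\beta$ and $\sin(\alpha+\beta) = \sin\alpha\cos\beta + \cos\alpha\sin\beta$, with $\alpha = n\theta$ and $\beta = \theta$. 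The real part becomes $\cos((n+1)\theta)$ and the imaginary part becomes $\sin((n+1)\theta)$, which closes the induction.

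There is no real obstacle here. The only point needing care is bookkeeping: the real part of the product must match the cosine addition formula and the imaginary part the sine addition formula, with the correct signs. The statement covers only nonnegative $n$, so we do not need to treat negative exponents or the inverse $\cos\theta - i\sin\theta$.
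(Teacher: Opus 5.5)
Your proof is correct and is essentially the paper's argument: induction on $n$ from the trivial base case $n = 0$, with the inductive step expanding $\bigl(\cos(n\theta) + i\sin(n\theta)\bigr)\bigl(\cos(\theta) + i\sin(\theta)\bigr)$ and identifying the real and imaginary parts through the cosine and sine angle-addition identities.
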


\begin{proof}
The claim follows by induction on $n$. The base case $n = 0$ is the trivial identity $1 = 1$. For the inductive step,
\begin{align*}
(\cos(\theta) + i\sin(\theta))^{n+1}
&= (\cos(n\theta) + i\sin(n\theta))(\cos(\theta) + i\sin(\theta)) \\
&= \bigl(\cos(n\theta)\cos(\theta) - \sin(n\theta)\sin(\theta)\bigr) + i\bigl(\sin(n\theta)\cos(\theta) + \cos(n\theta)\sin(\theta)\bigr) \\
&= \cos((n+1)\theta) + i\sin((n+1)\theta),
\end{align*}
using the angle-addition identities at the last step.
\end{proof}

\begin{theorem}\label{thm:int-cos-sin}
For every $x \in \R$,
\[
\int_0^x \cos(t)\dd t = \sin(x),
\qquad
\int_0^x \sin(t)\dd t = 1 - \cos(x).
\]
\end{theorem}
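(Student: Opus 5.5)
We will treat both integrals simultaneously by working with the complex combination $E(t) = \cos(t) + i\sin(t)$ and computing $\int_0^x E(t)\dd t$, understood as $\int_0^x \cos(t)\dd t + i\int_0^x \sin(t)\dd t$. Real and imaginary parts of Riemann sums are Riemann sums of the real and imaginary parts. Take $x > 0$ first. Partition $[0,x]$ uniformly with $\Delta = x/N$ and use left-endpoint tags $t_k = k\Delta$. By \Cref{lem:demoivre}, $E(k\Delta) = E(\Delta)^k$, so the Riemann sum is a geometric series:
\[
\Delta \sum_{k=0}^{N-1} E(\Delta)^k = \Delta\cdot\frac{E(\Delta)^N - 1}{E(\Delta) - 1} = \bigl(E(x) - 1\bigr)\cdot \frac{\Delta}{E(\Delta)-1}.
\]
This is valid as long as $E(\Delta) \neq 1$, which holds for $0 < \Delta < 2\pi$.

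The key step is the limit $\frac{E(\Delta)-1}{\Delta} \to i$ as $\Delta \to 0$, the trigonometric analogue of \Cref{lem:exp-derivative-at-0}. Its imaginary part is $\sin(\Delta)/\Delta \to 1$ by \eqref{eq:sin-over-theta}. For the real part, we use the half-angle identity $1 - \cos(\Delta) = 2\sin^2(\Delta/2)$, which gives
\[
\frac{\cos(\Delta) - 1}{\Delta} = -\sin(\Delta/2)\cdot\frac{\sin(\Delta/2)}{\Delta/2} \to 0 \cdot 1 = 0.
\]
Hence $\Delta/(E(\Delta)-1) \to 1/i = -i$. Since $\cos$ and $\sin$ are continuous, the Riemann sums converge to the integrals as $N \to \infty$ by \Cref{prop:standard}, so
\[
\int_0^x E(t)\dd t = -i\bigl(E(x) - 1\bigr) = \sin(x) + i\bigl(1 - \cos(x)\bigr).
\]
Comparing real and imaginary parts gives both formulas.

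For $x < 0$, we can run the same computation with $\Delta = x/N < 0$. The geometric-series identity and the limit as $\Delta \to 0^-$ are unchanged, and the sum becomes $-1$ times a Riemann sum over $[x,0]$, which matches the convention $\int_0^x = -\int_x^0$. Alternatively, we could use the parity of $\cos$ and $\sin$ together with a reflection of the partition. The case $x=0$ is trivial.

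The main obstacle is justifying the complex arithmetic cleanly. Only de Moivre and the finite geometric sum formula over $\mathbb{C}$ are needed, and both are algebraic. We also need to confirm that the limit of a complex quotient splits into real and imaginary parts. This holds because multiplication of complex numbers is continuous and $E(x)-1$ is fixed as $N$ varies. Securing the real-part limit via the half-angle identity is the only genuinely analytic input beyond \eqref{eq:sin-over-theta}.
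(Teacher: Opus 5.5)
Your proposal is correct and follows essentially the same route as the paper: a uniform left-endpoint complex Riemann sum for $\cos(t) + i\sin(t)$, collapsed to a geometric series by de Moivre's formula and evaluated in the limit using \eqref{eq:sin-over-theta} and the half-angle identities. The only difference is cosmetic: you pass to the limit by showing $(E(\Delta)-1)/\Delta \to i$ directly, whereas the paper first factors both $z^n - 1$ and $z - 1$ as $2i\sin(\cdot)$ times a unit complex number. You also handle $x<0$ and the condition $E(\Delta)\neq 1$ explicitly, which the paper leaves implicit.
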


\begin{proof}
The two identities are the real and imaginary parts of a single complex statement, so we compute the complex Riemann sum for $\int_0^x (\cos(t) + i\sin(t))\dd t$ and read off both. Use the uniform partition $t_k = kx/n$, and set $z = \cos(x/n) + i\sin(x/n)$. By \Cref{lem:demoivre}, $z^k = \cos(t_k) + i\sin(t_k)$. The Riemann sum is then a finite geometric series,
\[
S_n = \frac{x}{n}\sum_{k=0}^{n-1} z^k = \frac{x}{n}\cdot\frac{z^n - 1}{z - 1}.
\]

To pass to the limit, we expose the structure of $z^n - 1$ and $z - 1$ using the half-angle identities $\cos(\theta) - 1 = -2\sin^2(\theta/2)$ and $\sin(\theta) = 2\sin(\theta/2)\cos(\theta/2)$. Direct computation gives
\[
z^n - 1 = \cos(x) - 1 + i\sin(x) = -2\sin^2(x/2) + 2i\sin(x/2)\cos(x/2) = 2i\sin(x/2)\,W,
\]
where $W = \cos(x/2) + i\sin(x/2)$. Likewise,
\[
z - 1 = 2i\sin(x/(2n))\,w, \qquad w = \cos(x/(2n)) + i\sin(x/(2n)).
\]
Substituting,
\[
S_n = \frac{x}{n} \cdot \frac{\sin(x/2)}{\sin(x/(2n))} \cdot \frac{W}{w}
= 2 \cdot \frac{x/(2n)}{\sin(x/(2n))} \cdot \sin(x/2) \cdot \frac{W}{w}.
\]

Three factors approach known limits as $n \to \infty$. The first, $(x/(2n))/\sin(x/(2n))$, tends to $1$ by \eqref{eq:sin-over-theta}. The second, $\sin(x/2)$, is constant in $n$. The third, $w$, tends to $1$ by continuity of $\cos$ and $\sin$ at $0$, while $W$ is constant. Therefore
\[
\lim_n S_n = 2\sin(x/2)\,W = 2\sin(x/2)\cos(x/2) + 2i\sin^2(x/2) = \sin(x) + i\,(1 - \cos(x)),
\]
using the half-angle identities one more time. The real and imaginary parts of $S_n$ are the Riemann sums for $\int_0^x \cos(t)\dd t$ and $\int_0^x \sin(t)\dd t$ respectively, and they converge to $\sin(x)$ and $1 - \cos(x)$ as claimed.
\end{proof}

\subsection{Squared secant and squared cosecant}\label{sec:sec2-csc2}

The integrals of $\sec^2(t)$ and $\csc^2(t)$ submit to a telescoping argument that uses only the angle-subtraction formulas. The flavor is parallel to the de Moivre / geometric-series gambit just used. An addition or subtraction formula for the would-be antiderivative gives a multiplicative factor times a sample of the integrand, with the factor having a known limit as the partition refines.

\begin{lemma}\label{lem:tan-diff}
For all $A, B \in \R$ with $\cos(A), \cos(B) \neq 0$, we have
\[
\tan(A) - \tan(B) = \frac{\sin(A - B)}{\cos(A) \cos(B)}.
\]
Similarly, for all $A, B \in \R$ with $\sin(A), \sin(B) \neq 0$, we have
\[
\cot(B) - \cot(A) = \frac{\sin(A - B)}{\sin(A)\sin(B)}.
\]
\end{lemma}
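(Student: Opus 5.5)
We will prove both identities by clearing denominators and invoking the angle-subtraction formula for sine, which the paper takes as available among the addition identities. Recall that $\tan = \sin/\cos$ and $\cot = \cos/\sin$ wherever the denominators are nonzero.

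For the first identity, assume $\cos(A), \cos(B) \neq 0$ and put the difference over the common denominator $\cos(A)\cos(B)$:
\[
\tan(A) - \tan(B) = \frac{\sin(A)\cos(B) - \cos(A)\sin(B)}{\cos(A)\cos(B)}.
\]
The numerator is exactly the expansion of $\sin(A - B)$ given by the subtraction formula $\sin(A-B) = \sin(A)\cos(B) - \cos(A)\sin(B)$. This yields the claim.

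For the second identity, assume $\sin(A), \sin(B) \neq 0$ and write
\[
\cot(B) - \cot(A) = \frac{\cos(B)\sin(A) - \cos(A)\sin(B)}{\sin(A)\sin(B)}.
\]
The numerator is again $\sin(A-B)$ by the same subtraction formula, which completes the argument.

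There is essentially no obstacle here. The only care needed is the order of subtraction in the cotangent case: $\cot(B) - \cot(A)$, not $\cot(A) - \cot(B)$, is what produces $+\sin(A-B)$. If only the addition formula is taken as given, we first derive the subtraction form by replacing $B$ with $-B$ and using that cosine is even and sine is odd.
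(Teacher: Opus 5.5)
Your proof is correct and matches the paper's: you put each difference over a common denominator and recognize the numerator as $\sin(A-B)$ via the sine subtraction formula. You also write out the cotangent case, which the paper dismisses as symmetric, and you correctly note the order of subtraction it requires.
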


\begin{proof}
Direct algebra and the sine sum-angle identity gives 
\[
\tan(A) - \tan(B) = \frac{\sin(A)\cos(B) - \cos(A)\sin(B)}{\cos(A)\cos(B)} = \frac{\sin(A - B)}{\cos(A)\cos(B)}.
\]
The cotangent version is symmetric.
\end{proof}

\begin{theorem}\label{thm:int-sec2}
For $x \in (-\pi/2, \pi/2)$, we have
\[
\int_0^x \sec^2(t)\dd t = \tan(x).
\]
For $0 < a < b < \pi$, we have
\[
\int_a^b \csc^2(t)\dd t = \cot(a) - \cot(b).
\]
\end{theorem}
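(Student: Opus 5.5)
We prove the $\sec^2$ identity by telescoping a Riemann sum. The $\csc^2$ identity follows by the same argument with the cotangent half of \Cref{lem:tan-diff}.

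Take $x \in (0,\pi/2)$; the case $x<0$ is symmetric and $x=0$ is trivial. Partition $[0,x]$ uniformly by $t_k = kx/n$ with $\Delta = x/n$. Applying \Cref{lem:tan-diff} with $A = t_{k+1}$, $B = t_k$ gives
\[
\tan(t_{k+1}) - \tan(t_k) = \frac{\sin(\Delta)}{\cos(t_{k+1})\cos(t_k)}.
\]
Summing over $k$ telescopes to $\tan(x) = \sin(\Delta)\sum_{k=0}^{n-1} \frac{1}{\cos(t_k)\cos(t_{k+1})}$. Rewriting,
\[
\frac{\Delta}{\sin(\Delta)}\,\tan(x) = \sum_{k=0}^{n-1} \frac{1}{\cos(t_k)\cos(t_{k+1})}\,\Delta .
\]
By \eqref{eq:sin-over-theta}, the left side tends to $\tan(x)$ as $n\to\infty$.

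The right side is not a Riemann sum for $\sec^2$, since its terms are products of $\sec$ at two different points. This is the main obstacle. We handle it with the intermediate value theorem. The function $1/\cos$ is continuous and positive on $[t_k,t_{k+1}] \subset [0,x] \subset [0,\pi/2)$. Hence the product $\frac{1}{\cos(t_k)\cos(t_{k+1})}$ lies between the minimum and maximum of $\sec^2$ on that cell. Indeed, each factor lies between $\min \sec$ and $\max \sec$ on the cell, and all quantities are positive. Since $\sec^2$ is continuous, the intermediate value theorem gives a tag $\xi_k \in [t_k,t_{k+1}]$ with $\sec^2(\xi_k) = \frac{1}{\cos(t_k)\cos(t_{k+1})}$.

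The right side is therefore exactly the Riemann sum $S(\sec^2, P_n)$ for this tagged partition. Its mesh $x/n$ tends to $0$, and $\sec^2$ is continuous on $[0,x]$, so it is integrable by \Cref{prop:standard}. Hence the sums converge to $\int_0^x \sec^2(t)\dd t$, and equating limits gives $\tan(x)$. If one prefers to avoid the intermediate value theorem, an alternative is to compare with the left-endpoint sum $\sum \sec^2(t_k)\Delta$ and use uniform continuity of $1/\cos$ on $[0,x]$. The difference is then bounded termwise by $\eps \cdot \max\sec \cdot \Delta$, in the spirit of \Cref{lem:tag-perturb}.

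For $\csc^2$ on $[a,b] \subset (0,\pi)$, use the uniform partition $t_k = a + k(b-a)/n$ and the identity $\cot(t_k) - \cot(t_{k+1}) = \sin(\Delta)/(\sin(t_k)\sin(t_{k+1}))$. Summing telescopes to $\cot(a) - \cot(b)$. The same intermediate-value tagging then applies, because $\sin > 0$ on $[a,b]$, and we conclude $\int_a^b \csc^2(t)\dd t = \cot(a) - \cot(b)$.
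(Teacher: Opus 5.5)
Your proof is correct and follows the paper's route: a uniform partition, telescoping via \Cref{lem:tan-diff}, and extracting the factor $\sin(\Delta)/\Delta \to 1$. The only difference is how you handle the cross term $1/(\cos(t_k)\cos(t_{k+1}))$. You turn the sum into an exact Riemann sum using an intermediate-value tag, since the term is the geometric mean of $\sec^2(t_k)$ and $\sec^2(t_{k+1})$. The paper instead writes it as $\sec^2(t_k) + \eta_{k,n}$ with $\max_k|\eta_{k,n}| \to 0$ by uniform continuity, which is exactly the fallback you mention.
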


\begin{proof}
For the first identity, partition $[0, x]$ uniformly at $t_k = kx/n$ and telescope using \Cref{lem:tan-diff}, to obtain
\[
\tan(x) = \sum_{k=0}^{n-1} \bigl[\tan(t_{k+1}) - \tan(t_k)\bigr]
= \sum_{k=0}^{n-1} \frac{\sin(x/n)}{\cos(t_{k+1})\cos(t_k)}.
\]
Pulling out the sine factor and rewriting as a Riemann-sum-with-correction, we see
\[
\tan(x) = \frac{\sin(x/n)}{x/n} \cdot \frac{x}{n} \sum_{k=0}^{n-1} \frac{1}{\cos(t_{k+1})\cos(t_k)}.
\]
The leading factor tends to $1$ by \eqref{eq:sin-over-theta}. For the sum, $\cos(t)$ is continuous on the compact interval $[0, x]$ and bounded away from zero, so $\cos(t_{k+1}) = \cos(t_k) + O(1/n)$ uniformly in $k$ by uniform continuity. Hence
\[
\frac{1}{\cos(t_{k+1})\cos(t_k)} = \sec^2(t_k) + \eta_{k,n}
\]
with $\max_k |\eta_{k,n}| \to 0$. The rightmost factor therefore differs from a Riemann sum for $\int_0^x \sec^2(t)\dd t$ by at most $\max_k|\eta_{k,n}| \cdot x$, which vanishes in the limit. Taking $n \to \infty$ yields the integral identity.

The cosecant identity is identical with the cotangent version of \Cref{lem:tan-diff}. The requirement $0 < a$ encodes the (true and necessary) fact that $\csc^2(t)$ has a nonintegrable singularity at $0$.
\end{proof}

\begin{remark}
Our exponential and trigonometric identities all run on the same engine. An addition or subtraction formula expresses the increment of the would-be antiderivative as a multiplicative factor (depending only on the partition step) times a sample of the integrand. The factor has a known limit as the step shrinks. One might call it ``telescoping with a known multiplier.'' Once the reader sees this twice, they will start spotting it everywhere.
\end{remark}

\section{The substitution lemmas}\label{sec:sub}

We have built a respectable starter table of integrals. To extend it to the rest of the standard catalog --- $\int t^a \dd t$ for general real $a$, the inverse trigonometric integrals, the missing $\int \tan(t) \dd t$ and $\int \cot(t) \dd t$, and the rest --- we need substitution theorems. The two substitution lemmas in the standard calculus are $u$-substitution and integration by parts. We now prove these both.

\subsection{Change of variables}\label{sec:u-sub}

\begin{theorem}[Change of variables, integral form]\label{thm:u-sub}
Let $g \colon [a,b] \to \R$ be continuous and let $G \colon [a, b] \to \R$ be a continuous function satisfying
\[
G(x) - G(a) = \int_a^x g(t)\dd t \quad \text{for every } x \in [a, b].
\]
Let $f$ be continuous on a closed interval containing $G([a,b])$. Then
\[
\int_a^b f(G(t))\,g(t)\dd t = \int_{G(a)}^{G(b)} f(u)\dd u,
\]
with the convention $\int_\beta^\alpha = -\int_\alpha^\beta$ when $\beta < \alpha$.
\end{theorem}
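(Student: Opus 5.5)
The plan is to prove the identity directly at the level of Riemann sums, in keeping with the paper's program, and without any derivatives. Take a partition $a = t_0 < \cdots < t_n = b$ of small mesh and set $u_k = G(t_k)$. By additivity (\Cref{prop:standard}), the right-hand side telescopes:
\[
\int_{G(a)}^{G(b)} f(u)\dd u = \sum_{k=0}^{n-1} \int_{u_k}^{u_{k+1}} f(u)\dd u .
\]
This decomposition holds even though the $u_k$ need not be monotone, thanks to the orientation convention. The goal is to match each piece with the corresponding term $f(G(\xi_k))\,g(\xi_k)(t_{k+1}-t_k)$ of a Riemann sum for the left side.

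The matching uses two applications of \Cref{lem:MVT-int}. First, the hypothesis on $G$ gives
\[
u_{k+1} - u_k = \int_{t_k}^{t_{k+1}} g(t)\dd t = g(\eta_k)(t_{k+1}-t_k)
\]
for some $\eta_k \in [t_k, t_{k+1}]$. Second, on the interval between $u_k$ and $u_{k+1}$,
\[
\int_{u_k}^{u_{k+1}} f(u)\dd u = f(\zeta_k)(u_{k+1}-u_k)
\]
with $\zeta_k$ between $u_k$ and $u_{k+1}$. This also covers the reversed orientation, and the degenerate case $u_k = u_{k+1}$, where both sides vanish. By the intermediate value theorem applied to the continuous $G$ on $[t_k, t_{k+1}]$, we may write $\zeta_k = G(\sigma_k)$ with $\sigma_k \in [t_k, t_{k+1}]$. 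Hence, exactly,
\[
\int_{G(a)}^{G(b)} f(u)\dd u = \sum_{k=0}^{n-1} f(G(\sigma_k))\,g(\eta_k)\,(t_{k+1}-t_k).
\]

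The main obstacle is that this is a Riemann sum with two different tags, $\sigma_k$ and $\eta_k$, in the same cell. To close this gap, compare it with the genuine Riemann sum $\sum_k f(G(\eta_k))\,g(\eta_k)(t_{k+1}-t_k)$ for the continuous integrand $(f\circ G)\,g$. The difference is bounded by
\[
\max_k \bigl|f(G(\sigma_k)) - f(G(\eta_k))\bigr| \cdot \|g\|_\infty (b-a).
\]
Since $f\circ G$ is continuous on the compact interval $[a,b]$, it is uniformly continuous, and $|\sigma_k - \eta_k|$ is less than the mesh. So the argument of \Cref{lem:tag-perturb}, applied to $f\circ G$ and weighted by the bounded factor $g(\eta_k)$, makes the difference smaller than any $\eps$ once the mesh is small. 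Meanwhile, the genuine Riemann sum converges to $\int_a^b f(G(t))g(t)\dd t$ by integrability of continuous functions (\Cref{prop:standard}). The exact identity above holds for every partition, so letting the mesh tend to $0$ yields the theorem.

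A few routine points remain to check. Every $u_k$ and every point between consecutive $u_k$ lies in $G([a,b])$, which is an interval by the intermediate value theorem, so $f$ is defined and continuous wherever it is evaluated. Also, the orientation convention makes both the telescoping step and the mean value step valid when $u_{k+1} < u_k$.
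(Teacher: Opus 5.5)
Your proposal is correct and is essentially the paper's proof. You telescope $\int_{G(a)}^{G(b)} f(u)\,du$ along a partition, apply \Cref{lem:MVT-int} twice together with the intermediate value theorem to get an exact two-tag sum $\sum f(G(\sigma_k))\,g(\eta_k)(t_{k+1}-t_k)$, and close the gap with the uniform-continuity argument behind \Cref{lem:tag-perturb}, weighted by $\|g\|_\infty$. The paper routes this through the auxiliary function $\Phi(x)=\int_{G(a)}^{G(x)} f(u)\,du$ but only uses $x=b$, so your direct version, with its explicit choice of $\eta_k$ as the common tag, is the same argument stated slightly more cleanly.
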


\begin{proof}
Define $\Phi \colon [a, b] \to \R$ by $\Phi(x) = \int_{G(a)}^{G(x)} f(u)\dd u$, with the sign convention. We will show that $\Phi(x) = \int_a^x f(G(t))\,g(t)\dd t$ for every $x \in [a, b]$, and the case $x = b$ is the theorem.

Take any partition $a = s_0 < s_1 < \cdots < s_N = x$. Telescope $\Phi$ along the partition and rewrite each piece as an integral by \Cref{prop:standard}(2),
\[
\Phi(x) = \sum_{k=0}^{N-1} \bigl[\Phi(s_{k+1}) - \Phi(s_k)\bigr] = \sum_{k=0}^{N-1} \int_{G(s_k)}^{G(s_{k+1})} f(u)\dd u.
\]
By \Cref{lem:MVT-int} applied to $f$ on the interval between $G(s_k)$ and $G(s_{k+1})$ (the formula is valid even when $G(s_{k+1}) < G(s_k)$, since negating both sides recovers the standard direction), each summand equals $f(\eta_k)\,(G(s_{k+1}) - G(s_k))$ for some $\eta_k$ between $G(s_k)$ and $G(s_{k+1})$. By the intermediate value theorem applied to the continuous $G$ on $[s_k, s_{k+1}]$, we may write $\eta_k = G(s_k')$ for some $s_k' \in [s_k, s_{k+1}]$.

Next, by \Cref{lem:MVT-int} applied to $g$ on $[s_k, s_{k+1}]$, the increment $G(s_{k+1}) - G(s_k) = \int_{s_k}^{s_{k+1}} g(t)\dd t$ equals $g(\xi_k)\,(s_{k+1} - s_k)$ for some $\xi_k \in [s_k, s_{k+1}]$. Combining,
\[
\Phi(x) = \sum_{k=0}^{N-1} f(G(s_k'))\, g(\xi_k)\,(s_{k+1} - s_k).
\]

This is exactly a Riemann sum for $\int_a^x f(G(t))\,g(t)\dd t$, except that the tag inside $f \circ G$ is $s_k'$ and the tag inside $g$ is $\xi_k$, possibly distinct points of the same cell. The integrand $f(G(t))g(t)$ is continuous, and $|g| \le M$ on $[a, b]$ for some $M$. By \Cref{lem:tag-perturb} applied to $f \circ G$, with the $g$-factor of magnitude at most $M$ pulled aside, the difference between the displayed sum and a Riemann sum for $\int_a^x f(G(t))\,g(t)\dd t$ with a single common tag per cell is bounded by $\eps M (x - a)$ once the mesh is small enough. Hence $\Phi(x) = \lim_{\text{mesh}\to 0} (\text{Riemann sums}) = \int_a^x f(G(t))\,g(t)\dd t$.
\end{proof}

\subsection{Integration by parts}\label{sec:uv-sub}

\begin{theorem}[Integration by parts, integral form]\label{thm:uv-sub}
Let $u, v \colon [a, b] \to \R$ be continuous and let $p, q$ be continuous on $[a, b]$ with
\[
u(x) - u(a) = \int_a^x p(t)\dd t, \qquad v(x) - v(a) = \int_a^x q(t)\dd t
\]
for every $x \in [a, b]$. Then
\[
\int_a^b p(t)\, v(t)\dd t \;+\; \int_a^b u(t)\,q(t)\dd t \;=\; u(b)v(b) - u(a)v(a).
\]
\end{theorem}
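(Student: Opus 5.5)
We mirror the proof of \Cref{thm:u-sub}: telescope $uv$ along an arbitrary partition, rewrite each increment with \Cref{lem:MVT-int}, and recognize the result as a Riemann sum up to a tag perturbation. Take a partition $a = s_0 < s_1 < \cdots < s_N = b$. Then
\[
u(b)v(b) - u(a)v(a) = \sum_{k=0}^{N-1} \bigl[u(s_{k+1})v(s_{k+1}) - u(s_k)v(s_k)\bigr].
\]
We split each increment with the elementary product-difference identity
\[
u(s_{k+1})v(s_{k+1}) - u(s_k)v(s_k) = \bigl(u(s_{k+1}) - u(s_k)\bigr)v(s_{k+1}) + u(s_k)\bigl(v(s_{k+1}) - v(s_k)\bigr).
\]
This is an exact algebraic identity, so no error term arises here.

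Next, by additivity (\Cref{prop:standard}(2)) and the hypotheses, $u(s_{k+1}) - u(s_k) = \int_{s_k}^{s_{k+1}} p(t)\dd t$. By \Cref{lem:MVT-int} this equals $p(\xi_k)(s_{k+1}-s_k)$ for some $\xi_k \in [s_k, s_{k+1}]$. Likewise $v(s_{k+1}) - v(s_k) = q(\zeta_k)(s_{k+1}-s_k)$ for some $\zeta_k$ in the same cell. Hence, for every partition,
\[
u(b)v(b) - u(a)v(a) = \sum_{k=0}^{N-1} p(\xi_k)\,v(s_{k+1})\,(s_{k+1}-s_k) + \sum_{k=0}^{N-1} u(s_k)\,q(\zeta_k)\,(s_{k+1}-s_k).
\]

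The remaining step, and the only real obstacle, is that each sum is a Riemann sum with mismatched tags inside a single cell. We handle this as in the proof of \Cref{thm:u-sub}. The functions $p$ and $q$ are continuous on $[a,b]$, hence bounded, say $|p|,|q| \le M$. The functions $u$ and $v$ are continuous, hence uniformly continuous. Given $\eps>0$, choose $\delta$ as in \Cref{lem:tag-perturb} for both $u$ and $v$. For mesh below $\delta$ we replace $v(s_{k+1})$ by $v(\xi_k)$, at total cost at most $\eps M(b-a)$. Similarly we replace $u(s_k)$ by $u(\zeta_k)$, at the same cost.

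After these replacements, the first sum is a genuine Riemann sum for $\int_a^b p(t)v(t)\dd t$ with tags $\xi_k$, and the second is a Riemann sum for $\int_a^b u(t)q(t)\dd t$ with tags $\zeta_k$. Both integrands are continuous, hence integrable by \Cref{prop:standard}. As the mesh tends to $0$, these Riemann sums converge to the two integrals, while the perturbation errors are at most $2\eps M(b-a)$ with $\eps$ arbitrary. The left-hand side is independent of the partition, so it equals the sum of the two integrals. This proves the theorem.
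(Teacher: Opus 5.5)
Your proposal is correct and follows the paper's proof essentially step for step: you telescope $uv$ along a partition, split each increment by the same Abel-type identity, rewrite the increments of $u$ and $v$ via \Cref{lem:MVT-int}, and absorb the tag mismatches ($v(s_{k+1})$ versus $v(\xi_k)$, and $u(s_k)$ versus $u(\zeta_k)$) by uniform continuity as in \Cref{lem:tag-perturb}. Your explicit cost bound $\eps M(b-a)$, which uses $|p|,|q| \le M$, supplies the weighting by $p$ and $q$ that the paper leaves implicit in its one-line appeal to \Cref{lem:tag-perturb}, since that lemma as stated only covers unweighted sums.
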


\begin{proof}
Take any partition $a = t_0 < \cdots < t_n = b$. Telescope the difference of products,
\[
u(b)v(b) - u(a)v(a) = \sum_{k=0}^{n-1} \bigl[u(t_{k+1})v(t_{k+1}) - u(t_k)v(t_k)\bigr],
\]
then apply Abel's identity per term (see \cite{rudin} for the summation-by-parts form),
\[
u(t_{k+1})v(t_{k+1}) - u(t_k)v(t_k) = \bigl(u(t_{k+1}) - u(t_k)\bigr)v(t_{k+1}) + u(t_k)\bigl(v(t_{k+1}) - v(t_k)\bigr).
\]
By \Cref{lem:MVT-int} applied to $p$ and $q$ in turn,
\[
u(t_{k+1}) - u(t_k) = p(\xi_k)\,(t_{k+1}-t_k), \qquad
v(t_{k+1}) - v(t_k) = q(\eta_k)\,(t_{k+1}-t_k),
\]
for some $\xi_k, \eta_k \in [t_k, t_{k+1}]$. Substituting,
\[
u(b)v(b) - u(a)v(a) = \sum_k p(\xi_k)\,v(t_{k+1})\,(t_{k+1}-t_k) + \sum_k u(t_k)\,q(\eta_k)\,(t_{k+1}-t_k).
\]

Each sum on the right is a Riemann sum for one of the two integrals on the left side of the identity, modulo small mismatches in tags ($v(t_{k+1})$ vs.\ $v(\xi_k)$ in the first, and $u(t_k)$ vs.\ $u(\eta_k)$ in the second). Both $u$ and $v$ are continuous on $[a, b]$, so by \Cref{lem:tag-perturb} the discrepancies vanish in the limit as the mesh shrinks. Pass to the limit.
\end{proof}

\section{Derived integrals}\label{sec:derived}

With \Cref{thm:u-sub,thm:uv-sub} in hand, the rest of the standard integral table opens up.

\subsection{Real powers}

\begin{proposition}\label{prop:int-xa}
For any $a \in \R \setminus \{-1\}$ and any $0 < p < q$,
\[
\int_p^q t^a\dd t = \frac{q^{a+1} - p^{a+1}}{a+1}.
\]
\end{proposition}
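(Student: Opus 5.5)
Write $t^a = \exp(a\log t)$ as in \Cref{def:exp}, and apply \Cref{thm:u-sub} with the substitution $u = \log t$, so that the integral reduces to an exponential integral already computed in \Cref{thm:int-exp}.

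Take $G(t) = \log(t)$ on $[p,q]$ and $g(t) = 1/t$. The hypothesis $G(x) - G(p) = \int_p^x dt/t$ holds by \Cref{def:log} together with additivity in \Cref{prop:standard}. Both functions are continuous on $[p,q]$ since $p>0$. Then
\[
t^a = \exp(a\log t) = t\cdot \exp(a\log t)\cdot \frac1t = \exp((a+1)\log t)\cdot\frac1t .
\]
The middle step uses $t = \exp(\log t)$ and the homomorphism property $\exp(x)\exp(y)=\exp(x+y)$, which is the functional equation \Cref{thm:log-fe} read through the inverse. So set $f(u) = \exp((a+1)u) = e^{(a+1)u}$, which is continuous on $\R$, and the integrand becomes $f(G(t))g(t)$.

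\Cref{thm:u-sub} then gives
\[
\int_p^q t^a\dd t = \int_{\log p}^{\log q} e^{(a+1)u}\dd u .
\]
To evaluate the right side, write $e^{(a+1)u} = b^u$ with $b = e^{a+1} = \exp(a+1)$. Since $\log b = a+1 \neq 0$, we have $b \neq 1$, and by \Cref{def:exp} indeed $b^u = \exp(u\log b) = \exp((a+1)u)$. \Cref{thm:int-exp} yields
\[
\frac{b^{\log q} - b^{\log p}}{a+1}.
\]
Finally $b^{\log q} = \exp((a+1)\log q) = q^{a+1}$ by \Cref{def:exp}, and likewise for $p$, which gives the claim.

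The only step needing care is the bookkeeping with \Cref{def:exp}: checking that $t\cdot t^{a}\cdot t^{-1}$ recombines to $\exp((a+1)\log t)/t$, and that $e^{(a+1)u}$ really equals the $b^u$ of \Cref{thm:int-exp}. Both come down to $\exp$ converting sums to products, which follows by applying $\exp$ to \Cref{thm:log-fe}. An alternative that avoids substitution would mimic the geometric partition $t_k = p(q/p)^{k/n}$ of \Cref{lem:log-geometric}. The Riemann sum is then geometric with ratio $r^{a+1}$, where $r = (q/p)^{1/n}$, and the limit reduces to $\lim (r-1)/(r^{a+1}-1) = 1/(a+1)$ via \Cref{lem:exp-derivative-at-0}. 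I would keep this as a fallback if the substitution route hits a hypothesis mismatch.
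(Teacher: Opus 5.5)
Your proof is correct and is essentially the paper's: both use \Cref{thm:u-sub} to reduce the problem to $\int_{\log p}^{\log q} e^{(a+1)s}\dd s$, and both finish with \Cref{thm:int-exp} at base $b = e^{a+1}$. The only difference is the direction of the substitution: you take $G = \log$ on $[p,q]$, whose hypothesis is just \Cref{def:log}, whereas the paper takes $G(s) = e^s$ on $[\log p, \log q]$ with $f(u) = u^a$ and certifies that $G$ via \Cref{thm:int-exp} itself.
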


\begin{proof}
Apply \Cref{thm:u-sub} with $G(s) = e^s$ and $g(s) = e^s$ on $[\log(p), \log(q)]$, where $G$ has the required form by \Cref{thm:int-exp}. Take $f(u) = u^a$ continuous on $[p, q]$. Then
\[
\int_p^q u^a\dd u = \int_{G(\log(p))}^{G(\log(q))} u^a\dd u = \int_{\log(p)}^{\log(q)} (e^s)^a \cdot e^s \dd s = \int_{\log(p)}^{\log(q)} e^{(a+1)s}\dd s.
\]
The last integral evaluates by \Cref{thm:int-exp} to
\[
\frac{e^{(a+1)\log(q)} - e^{(a+1)\log(p)}}{a+1} = \frac{q^{a+1} - p^{a+1}}{a+1}.
\]
\end{proof}

\subsection{The inverse trigonometric integrands}\label{sec:inverse-trig}

\begin{proposition}\label{prop:arctan}
For all $y \in \R$,
\[
\int_0^y \frac{\dd t}{1+t^2} = \arctan(y),
\]
where $\arctan$ is the inverse of $\tan$ restricted to $(-\pi/2, \pi/2)$.
\end{proposition}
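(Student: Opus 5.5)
We apply \Cref{thm:u-sub} with the substitution $u = \tan(s)$. Fix $y \in \R$ and set $\theta = \arctan(y) \in (-\pi/2, \pi/2)$. On the closed interval with endpoints $0$ and $\theta$ (assume first $y > 0$, so $\theta > 0$), take $G(s) = \tan(s)$ and $g(s) = \sec^2(s)$. Both are continuous there because $\cos$ is bounded away from zero on $[0,\theta]$. By \Cref{thm:int-sec2}, $G(x) - G(0) = \tan(x) = \int_0^x \sec^2(t)\dd t$ for every $x \in [0, \theta]$, so the hypothesis of \Cref{thm:u-sub} holds. Take $f(u) = 1/(1+u^2)$, which is continuous on all of $\R$ and in particular on a closed interval containing $G([0,\theta]) = [0, y]$.

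\Cref{thm:u-sub} then gives
\[
\int_0^y \frac{\dd u}{1+u^2} = \int_{G(0)}^{G(\theta)} f(u)\dd u = \int_0^\theta \frac{\sec^2(s)}{1+\tan^2(s)}\dd s.
\]
The Pythagorean identity $\cos^2 + \sin^2 = 1$, divided by $\cos^2(s)$, gives $1 + \tan^2(s) = \sec^2(s)$. The integrand is therefore identically $1$, and the right side is $\int_0^\theta 1\dd s = \theta = \arctan(y)$. For the constant integral we may invoke \Cref{thm:int-power} with $n = 0$.

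The case $y = 0$ is trivial. For $y < 0$ we run the same argument on $[\theta, 0]$ with base point $a = \theta$. The hypothesis now reads $\tan(x) - \tan(\theta) = \int_\theta^x \sec^2(t)\dd t$. This follows from \Cref{thm:int-sec2} via additivity, since $\int_\theta^x = \int_0^x - \int_0^\theta$. The conclusion is $\int_y^0 f = -\theta$, which the orientation convention converts to $\int_0^y f = \theta$. Alternatively, oddness of the integrand handles this case directly.

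The only step needing care is checking the hypotheses of \Cref{thm:u-sub}: that $G = \tan$ is expressed as an integral of the continuous $g = \sec^2$ from the chosen base point. This requires $\theta$ to lie strictly inside $(-\pi/2,\pi/2)$, which is exactly why $\arctan$ is taken as the inverse of the restriction of $\tan$ to that interval. I expect no real obstacle beyond this bookkeeping. The substance has already been done in \Cref{thm:int-sec2} and \Cref{thm:u-sub}.
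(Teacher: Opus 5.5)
Your proposal is correct and is the paper's proof: \Cref{thm:u-sub} with $G = \tan$ and $g = \sec^2$ (justified by \Cref{thm:int-sec2}) and $f(u) = 1/(1+u^2)$, with $1 + \tan^2(s) = \sec^2(s)$ collapsing the integrand to $1$. Your explicit handling of $y < 0$, using base point $\theta$, additivity, and the orientation convention, spells out bookkeeping that the paper leaves implicit.
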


\begin{proof}
Apply \Cref{thm:u-sub} with $G(s) = \tan(s)$ and $g(s) = \sec^2(s)$ on $[0, \arctan(y)]$, where $G$ has the required form by \Cref{thm:int-sec2}. Take $f(u) = 1/(1 + u^2)$ continuous on $[0, y]$. Using the Pythagorean identity $1 + \tan^2(s) = \sec^2(s)$,
\[
\int_0^y \frac{\dd u}{1 + u^2}
= \int_0^{\arctan(y)} \frac{\sec^2(s)}{1 + \tan^2(s)}\dd s
= \int_0^{\arctan(y)} 1\dd s
= \arctan(y). \qedhere
\]
\end{proof}

\begin{proposition}\label{prop:arcsin}
For $y \in (-1, 1)$,
\[
\int_0^y \frac{\dd t}{\sqrt{1-t^2}} = \arcsin(y).
\]
The improper-integral extension to $|y| = 1$ gives $\pi/2$.
\end{proposition}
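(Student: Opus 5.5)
We mirror the proof of \Cref{prop:arctan}, using \Cref{thm:u-sub} with $G(s) = \sin(s)$ and $g(s) = \cos(s)$ on the interval $[0, \arcsin(y)]$. Here $\arcsin$ is the inverse of $\sin$ restricted to $[-\pi/2, \pi/2]$. That restriction is a continuous strictly increasing bijection onto $[-1,1]$, so $\arcsin(y) \in (-\pi/2, \pi/2)$ for $|y|<1$. The hypothesis $G(x) - G(0) = \int_0^x g(t)\dd t$ is exactly the cosine identity of \Cref{thm:int-cos-sin}, namely $\int_0^x \cos(t)\dd t = \sin(x)$, valid for all real $x$. In particular it holds for every $x$ between $0$ and $\arcsin(y)$. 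If $y<0$ the interval is oriented backwards; I would handle this either by the sign convention or by the oddness of both sides. I would take $f(u) = 1/\sqrt{1-u^2}$, which is continuous on the closed interval between $0$ and $y$ because $|y| < 1$. Since $G$ is monotone there, $G([0,\arcsin y])$ is exactly that interval.

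The substitution then gives
\[
\int_0^y \frac{\dd u}{\sqrt{1-u^2}} = \int_0^{\arcsin(y)} \frac{\cos(s)}{\sqrt{1-\sin^2(s)}}\dd s.
\]
The Pythagorean identity gives $\sqrt{1-\sin^2(s)} = |\cos(s)|$. The only point needing care, which I regard as the main (mild) obstacle, is the sign: on $(-\pi/2,\pi/2)$ we have $\cos(s) > 0$, so $|\cos(s)| = \cos(s)$. Hence the integrand is identically $1$, and the integral equals $\arcsin(y)$. This step is why we restrict $s$ to the open interval, where $f\circ G$ is also continuous.

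For the improper extension, let $y \to 1^-$. The integrand is positive, so the integral increases in $y$. By the finite case it equals $\arcsin(y)$, and $\arcsin(y) \to \arcsin(1) = \pi/2$ by continuity of $\arcsin$, which holds because it is the inverse of a continuous strictly monotone function on a compact interval. The case $y \to -1^+$ gives $-\pi/2$ by symmetry. This is the sense of the convention on improper integrals, so the claimed value $\pi/2$ follows.
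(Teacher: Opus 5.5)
Your proposal is correct and follows the paper's proof essentially verbatim. Both apply \Cref{thm:u-sub} with $G = \sin$ and $g = \cos$ (justified by \Cref{thm:int-cos-sin}) and $f(u) = 1/\sqrt{1-u^2}$, then use $\sqrt{1-\sin^2(s)} = \cos(s) > 0$ on $(-\pi/2,\pi/2)$ to reduce the integrand to $1$. Your handling of the reversed orientation when $y<0$ and your explicit limit $\arcsin(y)\to\pi/2$ for the improper case fill in points the paper leaves implicit.
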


\begin{proof}
Apply \Cref{thm:u-sub} with $G(s) = \sin(s)$ and $g(s) = \cos(s)$ on $[0, \arcsin(y)]$, where $G$ has the required form by \Cref{thm:int-cos-sin}, and take $f(u) = 1/\sqrt{1 - u^2}$. On $(-\pi/2, \pi/2)$, $\sqrt{1 - \sin^2(s)} = \cos(s) > 0$, so
\[
\int_0^y \frac{\dd u}{\sqrt{1 - u^2}}
= \int_0^{\arcsin(y)} \frac{\cos(s)}{\sqrt{1 - \sin^2(s)}}\dd s
= \int_0^{\arcsin(y)} 1\dd s
= \arcsin(y). \qedhere
\]
\end{proof}

\subsection{Tangent, cotangent, and the apologetic substitutions for secant and cosecant}

\begin{proposition}\label{prop:int-tan-cot}
On any interval where the integrand is continuous,
\[
\int \tan(t)\dd t = -\log|\cos(t)| + C, \qquad \int \cot(t)\dd t = \log|\sin(t)| + C.
\]
\end{proposition}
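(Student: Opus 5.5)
We will read the statement as the definite-integral identity: for $[a,b]$ contained in an interval on which $\cos$ does not vanish,
\[
\int_a^b \tan(t)\dd t = -\log|\cos(b)| + \log|\cos(a)|,
\]
and similarly for $\cot$ on intervals where $\sin$ does not vanish. The plan is to apply \Cref{thm:u-sub} with $G = \cos$ and $f(u) = -1/u$. The product $f(G(t))\,g(t)$ then equals $\tan(t)$, after which we evaluate $\int dt/t$ using \Cref{def:log}.

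First we need $\cos$ in the form required by \Cref{thm:u-sub}, namely $\cos(x) - \cos(a) = \int_a^x (-\sin(t))\dd t$. This follows from \Cref{thm:int-cos-sin}: we have $\int_0^x \sin(t)\dd t = 1 - \cos(x)$. Using additivity from \Cref{prop:standard}, we get $\int_a^x \sin(t)\dd t = \cos(a) - \cos(x)$, and linearity flips the sign. So we take $G = \cos$ and $g = -\sin$, both continuous on $[a,b]$. Similarly, $\sin(x) - \sin(a) = \int_a^x \cos(t)\dd t$, so for the cotangent we take $G = \sin$ and $g = \cos$.

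Next, since $\cos$ has no zero on $[a,b]$, the intermediate value theorem places $G([a,b])$ inside a compact interval $J$ contained entirely in $(0,\infty)$ or entirely in $(-\infty,0)$. On $J$ the function $f(u) = -1/u$ is continuous, and $f(\cos(t))\cdot(-\sin(t)) = \tan(t)$. \Cref{thm:u-sub} therefore gives
\[
\int_a^b \tan(t)\dd t = -\int_{\cos(a)}^{\cos(b)} \frac{du}{u}.
\]

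It remains to evaluate $\int_\alpha^\beta du/u$ when $\alpha$ and $\beta$ have the same sign. This is where the absolute value appears. If both are positive, \Cref{def:log} and additivity give $\log(\beta) - \log(\alpha)$. If both are negative, we apply \Cref{thm:u-sub} once more with $G(s) = -s$, $g = -1$ and $f(u) = 1/u$. The integral $\int_\alpha^\beta du/u$ becomes $\int_{-\alpha}^{-\beta} dv/v$, which equals $\log|\beta| - \log|\alpha|$. One could instead use a direct Riemann-sum reflection. Substituting this evaluation into the display yields the tangent formula, and the cotangent case is identical with $G = \sin$ and $f(u) = 1/u$.

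The expected obstacle is mild: handling the sign and the connected-interval hypothesis cleanly, so that $1/u$ remains continuous on the image of $G$. The negative-argument reflection is the only step requiring a fresh small computation.
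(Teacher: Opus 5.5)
Your proof is correct and follows the paper's argument: apply \Cref{thm:u-sub} with $G = \cos$, $g = -\sin$ (from \Cref{thm:int-cos-sin} via additivity and a sign flip) and $f(u) = -1/u$, and treat the cotangent symmetrically with $G = \sin$. Your reflection substitution $G(s) = -s$, used to evaluate $\int_\alpha^\beta du/u$ when $\alpha, \beta < 0$, also supplies a step the paper skips silently when it writes the answer with absolute values.
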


\begin{proof}
For $\tan(t)$, on an interval where $\cos(t)$ has constant sign, take $G(s) = \cos(s)$ with defining integrand $g(s) = -\sin(s)$ (by \Cref{thm:int-cos-sin} after a sign flip). With $f(u) = -1/u$, \Cref{thm:u-sub} gives
\[
\int_a^b \tan(t)\dd t = \int_a^b \frac{-(-\sin(t))}{\cos(t)}\dd t = \int_{\cos(a)}^{\cos(b)} \frac{-\dd u}{u} = -\bigl(\log|\cos(b)| - \log|\cos(a)|\bigr).
\]
The cotangent identity is symmetric.
\end{proof}

For $\sec(t)$ and $\csc(t)$, the standard trick is to multiply by $(\sec(t) + \tan(t))/(\sec(t) + \tan(t))$ and recognize the result as $g/G$ for $G(t) = \sec(t) + \tan(t)$. The substitution that pulls this off is a rabbit out of a hat in any framework, ours included. We present it without apology.

\begin{proposition}\label{prop:int-sec}
On $(-\pi/2, \pi/2)$,
\[
\int_0^x \sec(t)\dd t = \log|\sec(x) + \tan(x)|.
\]
\end{proposition}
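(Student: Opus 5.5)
The plan is to apply \Cref{thm:u-sub} with $G(s) = \sec(s) + \tan(s)$ and $f(u) = 1/u$, exactly as the remark preceding the statement suggests. The work is in producing the continuous $g$ for which $G(x) - G(0) = \int_0^x g(t)\dd t$. The candidate is
\[
g(t) = \sec(t)\tan(t) + \sec^2(t) = \sec(t)\bigl(\sec(t) + \tan(t)\bigr).
\]

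The $\tan$ half of this integral identity is supplied directly by \Cref{thm:int-sec2}, since $\int_0^x \sec^2(t)\dd t = \tan(x)$. So it remains to show that
\[
\int_0^x \sec(t)\tan(t)\dd t = \sec(x) - 1 .
\]
I would obtain this from \Cref{thm:u-sub} itself. Take the inner function $\cos$, whose defining integrand is $-\sin$; this follows from \Cref{thm:int-cos-sin}, since $\cos(x) - 1 = -\int_0^x \sin(t)\dd t$. Take $f(u) = -1/u^2$, which is continuous on $\cos([0,x]) \subset (0,1]$ for $|x| < \pi/2$. Then
\[
\int_0^x \sec(t)\tan(t)\dd t = \int_0^x f(\cos(t))\,(-\sin(t))\dd t = \int_1^{\cos(x)} \frac{-\dd u}{u^2}.
\]
The last integral evaluates by \Cref{prop:int-xa} with $a = -2$ (with orientation handled by the sign convention) to $1/\cos(x) - 1$, as required.

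If one prefers to stay in the telescoping spirit, the alternative route is the identity
\[
\sec(A) - \sec(B) = \frac{\cos(B) - \cos(A)}{\cos(A)\cos(B)},
\]
followed by the product formula $\cos(B) - \cos(A) = 2\sin\bigl(\tfrac{A+B}{2}\bigr)\sin\bigl(\tfrac{A-B}{2}\bigr)$. One then argues as in \Cref{thm:int-sec2}, but the substitution route is cleaner.

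With $G$ and $g$ in hand, note that $G(s) = (1+\sin(s))/\cos(s) > 0$ on $(-\pi/2,\pi/2)$. Hence $f(u) = 1/u$ is continuous on the compact image $G([0,x])$, and $G(0) = 1$. \Cref{thm:u-sub} then gives
\[
\int_0^x \frac{g(t)}{G(t)}\dd t = \int_1^{G(x)} \frac{\dd u}{u} = \log(G(x)),
\]
which is $\log|\sec(x) + \tan(x)|$ because $G > 0$. Finally, $g/G = \sec(t)$ pointwise, so the left side is $\int_0^x \sec(t)\dd t$. The case $x < 0$ is covered by the orientation convention, or by applying the theorem on $[x,0]$.

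The main obstacle is the middle step: producing $\int_0^x \sec(t)\tan(t)\dd t = \sec(x)-1$ without derivatives. Everything else is bookkeeping of positivity and continuity, needed so that \Cref{thm:u-sub} applies on a compact interval where $\cos$ and $G$ stay bounded away from zero.
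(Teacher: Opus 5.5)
Your proposal is correct and follows the paper's proof. It uses the same factorization $\sec(t) = g(t)/G(t)$ with $G = \sec + \tan$ and $g = \sec^2 + \sec\tan$, and the same choice $f(u) = 1/u$ in \Cref{thm:u-sub}. The only difference is that you actually prove $\int_0^x \sec(t)\tan(t)\dd t = \sec(x) - 1$, which the paper leaves to the reader as a telescoping exercise; you do it by a second application of \Cref{thm:u-sub} with inner function $\cos$, together with \Cref{prop:int-xa} at $a=-2$. That route is legitimate and non-circular, since both results precede this proposition and neither depends on it.
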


\begin{proof}
Write
\[
\sec(t) = \sec(t) \cdot \frac{\sec(t) + \tan(t)}{\sec(t) + \tan(t)} = \frac{\sec^2(t) + \sec(t)\tan(t)}{\sec(t) + \tan(t)}.
\]
Set $G(t) = \sec(t) + \tan(t)$. Its defining integrand is $g(t) = \sec^2(t) + \sec(t)\tan(t)$, since $\int_0^x \sec^2(t)\dd t = \tan(x)$ by \Cref{thm:int-sec2} and $\int_0^x \sec(t)\tan(t)\dd t = \sec(x) - 1$ (a small telescoping verification analogous to \Cref{thm:int-sec2}, which we leave to the reader). With $f(u) = 1/u$, \Cref{thm:u-sub} delivers
\[
\int_0^x \sec(t)\dd t = \int_{G(0)}^{G(x)} \frac{\dd u}{u} = \log\bigl|G(x)\bigr| - \log\bigl|G(0)\bigr| = \log|\sec(x) + \tan(x)|. \qedhere
\]
\end{proof}

The cosecant case runs the same machinery with $G(t) = -(\csc(t) + \cot(t))$.

\subsection{The hyperbolic integrals}\label{sec:hyperbolic}

The hyperbolic functions are defined in terms of the exponential by 
\[
\cosh(t) \coloneqq \frac{e^t + e^{-t}}{2}, \quad \sinh(t) \coloneqq \frac{e^t - e^{-t}}{2},
\]
and the rest by the obvious quotients. Their integrals form a parallel catalog to the trigonometric one, derivable directly from \Cref{thm:int-exp} together with the same kind of telescoping argument used for $\sec^2(t)$.

\begin{proposition}\label{prop:hyperbolic}
For all $x \in \R$, we have
\[
\int_0^x \cosh(t)\dd t = \sinh(x), \qquad \int_0^x \sinh(t)\dd t = \cosh(x) - 1, \qquad \int_0^x \sech^2(t)\dd t = \tanh(x).
\]
For any $a, b$ with $0 < a < b$,
\[
\int_a^b \csch^2(t)\dd t = \coth(a) - \coth(b).
\]
For all $y \in \R$, $y \ge 1$, and $|y| < 1$ respectively, we have
\[
\int_0^y \frac{\dd t}{\sqrt{1 + t^2}} = \arsinh(y), \quad
\int_1^y \frac{\dd t}{\sqrt{t^2 - 1}} = \arcosh(y), \quad
\int_0^y \frac{\dd t}{1 - t^2} = \artanh(y).
\]
\end{proposition}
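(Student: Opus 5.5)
The plan is to reduce everything to \Cref{thm:int-exp}, a telescoping argument modeled on \Cref{thm:int-sec2}, and \Cref{thm:u-sub}.

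\textbf{Cosh and sinh.} By linearity (\Cref{prop:standard}(1)) and \Cref{thm:int-exp},
\[
\int_0^x e^t\dd t = e^x - 1, \qquad \int_0^x e^{-t}\dd t = 1 - e^{-x}.
\]
The second identity comes from \Cref{thm:int-exp} with $b = e^{-1}$, since $\log(e^{-1}) = -1$. Taking half the sum and half the difference gives $\sinh(x)$ and $\cosh(x) - 1$.

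\textbf{Sech squared and csch squared.} I mimic \Cref{thm:int-sec2}. The needed subtraction formula is
\[
\tanh(A) - \tanh(B) = \frac{\sinh(A-B)}{\cosh(A)\cosh(B)},
\]
which follows from $\sinh(A)\cosh(B) - \cosh(A)\sinh(B) = \sinh(A-B)$. That identity is a direct expansion in exponentials. Similarly,
\[
\coth(B) - \coth(A) = \frac{\sinh(A-B)}{\sinh(A)\sinh(B)}.
\]
Telescoping over the uniform partition $t_k = kx/n$ gives
\[
\tanh(x) = \frac{\sinh(x/n)}{x/n}\cdot\frac{x}{n}\sum_{k=0}^{n-1} \frac{1}{\cosh(t_k)\cosh(t_{k+1})}.
\]
The multiplier tends to $1$: writing $\sinh(h)/h = \tfrac12\bigl((e^h-1)/h + (e^{-h}-1)/(-h)\bigr)$ and applying \Cref{lem:exp-derivative-at-0} twice does it. For the sum, $\cosh \ge 1$ is bounded away from zero and uniformly continuous on $[0,x]$. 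So, exactly as in \Cref{thm:int-sec2},
\[
\frac{1}{\cosh(t_k)\cosh(t_{k+1})} = \sech^2(t_k) + \eta_{k,n}, \qquad \max_k|\eta_{k,n}| \to 0.
\]
The sum is therefore a Riemann sum for $\int_0^x \sech^2(t)\dd t$ up to a vanishing error, which gives $\tanh(x)$. The $\csch^2$ case on $[a,b]$ with $a > 0$ is identical, because $\sinh$ is bounded away from $0$ there.

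\textbf{Inverse hyperbolic integrals.} I apply \Cref{thm:u-sub} three times.
\begin{enumerate}
\item For $\arsinh$, take $G = \sinh$ and $g = \cosh$ on $[0, \arsinh(y)]$, and $f(u) = (1+u^2)^{-1/2}$. The identity $\cosh^2 - \sinh^2 = 1$ together with $\cosh > 0$ gives $\sqrt{1+\sinh^2(s)} = \cosh(s)$, so the transformed integrand is $1$.
\item For $\arcosh$ with $y > 1$, take $G = \cosh$ and $g = \sinh$ on $[\delta, \arcosh(y)]$, and $f(u) = (u^2-1)^{-1/2}$. On $s > 0$ we have $\sqrt{\cosh^2(s) - 1} = \sinh(s) > 0$. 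This gives $\arcosh(y) - \delta$ as the integral from $\cosh(\delta)$ to $y$. Letting $\delta \to 0^+$ recovers the improper integral at $1$, using continuity of $\cosh$ and the paper's improper-integral convention. The case $y = 1$ is trivial.
\item For $\artanh$, take $G = \tanh$ and $g = \sech^2$ on $[0, \artanh(y)]$, with $G$ admissible by the previous part, and $f(u) = 1/(1-u^2)$. Since $1 - \tanh^2 = \sech^2$, the integrand is again $1$.
\end{enumerate}
Negative upper limits are handled by the same substitutions, using the orientation convention built into \Cref{thm:u-sub}.

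\textbf{Main obstacle.} I expect the only delicate point to be the $\arcosh$ case. Its integrand is singular at the endpoint $1$, and \Cref{thm:u-sub} requires $f$ continuous on a closed interval containing $G([a,b])$. The truncation at $\delta$ followed by the limit handles this; everything else is routine bookkeeping.
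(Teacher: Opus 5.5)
Your proposal is correct and follows the same route as the paper's proof sketch. You use linearity plus \Cref{thm:int-exp} for $\cosh$ and $\sinh$, the hyperbolic subtraction formulas fed into the telescoping argument of \Cref{thm:int-sec2} for $\sech^2$ and $\csch^2$, and \Cref{thm:u-sub} with $G = \sinh, \cosh, \tanh$ for the inverse hyperbolic integrals. You also fill in details the sketch leaves implicit, namely $\sinh(h)/h \to 1$ via \Cref{lem:exp-derivative-at-0} and the $\delta$-truncation that handles the endpoint singularity of the $\arcosh$ integrand.
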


\begin{proof}[Proof sketch]
The first two identities are immediate from \Cref{thm:int-exp} and linearity, since $\cosh(t)$ and $\sinh(t)$ are linear combinations of $e^t$ and $e^{-t}$. The identities for $\sech^2(t)$ and $\csch^2(t)$ follow from the telescoping argument of \Cref{thm:int-sec2} applied to the hyperbolic analogue 
\[
\tanh(A) - \tanh(B) = \frac{\sinh(A - B)}{\cosh(A)\cosh(B)}
\]
and its cotangent counterpart. The three inverse hyperbolic identities follow by \Cref{thm:u-sub} applied to $G(t) = \sinh(t)$, $\cosh(t)$, and $\tanh(t)$ in turn, with the Pythagorean-style identity $\cosh^2(t) - \sinh^2(t) = 1$ playing the role of $\sin^2(t) + \cos^2(t) = 1$ in \Cref{prop:arcsin,prop:arctan}.
\end{proof}

\subsection{A showcase for integration by parts}

Integration by parts has been waiting in the wings. Its first natural application is the integral of the logarithm itself.

\begin{proposition}\label{prop:int-log}
For $x > 0$,
\[
\int_1^x \log(t)\dd t = x \log(x) - x + 1.
\]
\end{proposition}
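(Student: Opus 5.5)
We apply \Cref{thm:uv-sub} on the interval with endpoints $1$ and $x$, taking $u(t) = \log(t)$ with $p(t) = 1/t$, and $v(t) = t$ with $q(t) = 1$. Both hypotheses hold by construction or by earlier results. Indeed, $u(y) - u(1) = \log(y) = \int_1^y \dd t/t$ is \Cref{def:log} together with $\log(1) = 0$ from \Cref{cor:log-properties}. Likewise, $v(y) - v(1) = y - 1 = \int_1^y 1\dd t$ is the case $n = 0$ of \Cref{thm:int-power}, combined with additivity from \Cref{prop:standard}(2). All four functions are continuous on any compact subinterval of $(0,\infty)$, so the regularity hypotheses are satisfied.

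For $x > 1$, the theorem gives
\[
\int_1^x \frac{1}{t}\cdot t\dd t + \int_1^x \log(t)\cdot 1\dd t = x\log(x) - 1\cdot\log(1) = x\log(x).
\]
The first integral is $\int_1^x 1\dd t = x - 1$, again by \Cref{thm:int-power}. Rearranging yields $\int_1^x \log(t)\dd t = x\log(x) - x + 1$.

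The only point requiring care is the case $0 < x < 1$, because \Cref{thm:uv-sub} is stated on an interval $[a,b]$ with $a < b$ and base point $a$. Here I would apply the theorem on $[x, 1]$ with base point $a = x$. The hypotheses become $u(y) - u(x) = \int_x^y \dd t/t$, which holds by additivity and the sign convention, since both sides equal $\log(y) - \log(x)$. The analogous identity for $v$ holds in the same way. The theorem then gives
\[
(1 - x) + \int_x^1 \log(t)\dd t = 0 - x\log(x).
\]
Negating via $\int_1^x = -\int_x^1$ recovers the same formula. The case $x = 1$ is trivial, since both sides vanish. I expect no genuine obstacle: the whole proof is bookkeeping of the hypotheses and orientation, and the analytic work has already been done in \Cref{thm:uv-sub}.
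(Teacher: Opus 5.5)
Your proof is correct and is the paper's own argument: apply \Cref{thm:uv-sub} with $u = \log$, $p(t) = 1/t$, $v(t) = t$, $q(t) = 1$, then evaluate $\int_1^x 1\dd t = x - 1$ and use $\log(1) = 0$. Your separate treatment of $0 < x < 1$ on $[x, 1]$ is careful bookkeeping that the paper's proof leaves implicit.
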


\begin{proof}
Apply \Cref{thm:uv-sub} with $u(t) = \log(t)$ (so that $p(t) = 1/t$ by \Cref{def:log}) and $v(t) = t$ (so that $q(t) = 1$ by \Cref{thm:int-power} with $n = 0$). Then
\[
\int_1^x \log(t) \cdot 1\dd t + \int_1^x \frac{1}{t}\cdot t\dd t = x \log(x) - 1 \cdot \log(1).
\]
The right side simplifies because $\log(1) = 0$, and the second integral on the left is $\int_1^x 1\dd t = x - 1$. Solving,
\[
\int_1^x \log(t)\dd t = x\log(x) - (x - 1) = x\log(x) - x + 1.
\]
\end{proof}

A second application, using \Cref{thm:u-sub} as a sub-step, is the integral of arctangent.

\begin{proposition}\label{prop:int-arctan}
For $x \in \R$,
\[
\int_0^x \arctan(t)\dd t = x \arctan(x) - \tfrac{1}{2}\log(1 + x^2).
\]
\end{proposition}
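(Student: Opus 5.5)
Apply \Cref{thm:uv-sub} on $[0,x]$ (for $x<0$ the same argument runs on $[x,0]$ with the orientation convention) with $u(t) = \arctan(t)$ and $v(t) = t$. By \Cref{prop:arctan}, $u(x) - u(0) = \int_0^x \dd t/(1+t^2)$, so $p(t) = 1/(1+t^2)$. By \Cref{thm:int-power} with $n=0$, $q(t) = 1$. All four functions are continuous on the interval, so the hypotheses hold. The theorem then gives
\[
\int_0^x \frac{t}{1+t^2}\dd t + \int_0^x \arctan(t)\dd t = x\arctan(x) - 0\cdot\arctan(0) = x\arctan(x).
\]

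It remains to evaluate $\int_0^x t/(1+t^2)\dd t$, which is the step needing \Cref{thm:u-sub}. Take $G(s) = 1+s^2$ on $[0,x]$, with $g(s) = 2s$; the required relation $G(y) - G(0) = y^2 = \int_0^y 2s\dd s$ follows from \Cref{thm:int-power} with $n=1$ and linearity. Take $f(w) = 1/(2w)$, which is continuous on $[1, 1+x^2] \supseteq G([0,x])$, since $G \ge 1$ there. Then $f(G(s))g(s) = s/(1+s^2)$, and \Cref{thm:u-sub} yields
\[
\int_0^x \frac{s}{1+s^2}\dd s = \frac12\int_1^{1+x^2}\frac{\dd w}{w} = \tfrac12\log(1+x^2)
\]
by \Cref{def:log}. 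For $x<0$, $G$ is still continuous with values in $[1,1+x^2]$, and the signed-integral conventions handle the orientation.

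Subtracting gives the claimed formula. I expect no real obstacle. The only care points are checking that $G$ and $v$ genuinely satisfy the integral hypotheses of the two substitution theorems, and handling the case $x<0$, which reduces to the same computation by the orientation convention (or by oddness of $\arctan$ together with evenness of the right side).
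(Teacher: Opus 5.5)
Your proof is correct and is essentially the paper's own argument. You apply integration by parts (\Cref{thm:uv-sub}) with $u=\arctan$, $v(t)=t$, $p(t)=1/(1+t^2)$, $q=1$, and then evaluate $\int_0^x t/(1+t^2)\dd t$ by \Cref{thm:u-sub} with $G(s)=1+s^2$, $g(s)=2s$, $f(w)=1/(2w)$; your added care with the case $x<0$ and the hypothesis checks only fills in details the paper leaves implicit.
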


\begin{proof}
Apply \Cref{thm:uv-sub} with $u(t) = \arctan(t)$ (so $p(t) = 1/(1+t^2)$ by \Cref{prop:arctan}) and $v(t) = t$,
\[
\int_0^x \arctan(t)\dd t + \int_0^x \frac{t}{1 + t^2}\dd t = x \arctan(x).
\]
For the second integral, take $G(s) = 1 + s^2$ with defining integrand $g(s) = 2s$ (by \Cref{thm:int-power}) and $f(u) = 1/(2u)$ in \Cref{thm:u-sub},
\[
\int_0^x \frac{t}{1 + t^2}\dd t = \int_1^{1 + x^2} \frac{\dd u}{2u} = \tfrac{1}{2}\log(1 + x^2).
\]
Substituting and rearranging gives the result.
\end{proof}

Other classical entries --- $\int x^n e^x\dd x$, $\int e^x \sin(x)\dd x$, the integral of $\arcsin$, and so on --- are similar showcases of \Cref{thm:uv-sub}, sometimes iterated, combined with our existing table.

\section{The Fundamental Theorem of Calculus}\label{sec:the Fundamental Theorem of Calculus}

We now cross the bridge from integral to differential calculus.

\begin{definition}
For $F\colon [a,b] \to \R$ and $x \in (a,b)$, the \emph{derivative} of $F$ at $x$ is
\[
\deriv{x}F(x) \coloneqq \lim_{h \to 0} \frac{F(x+h) - F(x)}{h},
\]
when the limit exists. We use one-sided versions at the endpoints $a$ and $b$.
\end{definition}

\begin{theorem}[Fundamental Theorem of Calculus, integral-to-differential]\label{thm:the Fundamental Theorem of Calculus-1}
Let $f$ be continuous on $[a,b]$, and define
\[
F(x) = \int_a^x f(t)\dd t.
\]
Then $F$ is differentiable on $[a,b]$ with $\deriv{x}F(x) = f(x)$.
\end{theorem}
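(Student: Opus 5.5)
The plan is to compute the difference quotient directly and reduce it to a single sample of $f$ using \Cref{lem:MVT-int}. Fix $x \in [a,b]$ and a nonzero $h$ with $x + h \in [a,b]$; one-sided $h$ is used at the endpoints, as the definition requires. By additivity (\Cref{prop:standard}(2)), together with the sign convention $\int_\beta^\alpha = -\int_\alpha^\beta$, we get
\[
F(x+h) - F(x) = \int_x^{x+h} f(t)\dd t .
\]
This identity holds for either sign of $h$.

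Next I apply \Cref{lem:MVT-int} to $f$ on the closed interval between $x$ and $x+h$. It supplies a point $\xi_h$ in that interval with $\int_x^{x+h} f(t)\dd t = f(\xi_h)\,h$. For $h<0$ I apply the lemma on $[x+h, x]$ and then negate both sides. The sign flips on each side cancel, so the same formula holds. Dividing by $h$ gives the exact expression
\[
\frac{F(x+h) - F(x)}{h} = f(\xi_h),
\]
with $|\xi_h - x| \le |h|$.

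The final step is the limit. As $h \to 0$ we have $\xi_h \to x$, and $f$ is continuous at $x$, so $f(\xi_h) \to f(x)$. Concretely, given $\eps > 0$, choose $\delta$ so that $|t - x| < \delta$ with $t \in [a,b]$ implies $|f(t) - f(x)| < \eps$. Then $0 < |h| < \delta$ forces the difference quotient to lie within $\eps$ of $f(x)$. This proves $\deriv{x}F(x) = f(x)$, with the limit taken one-sidedly at $a$ and $b$.

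The only point needing care is the bookkeeping for negative $h$ and at the endpoints. Specifically, \Cref{lem:MVT-int} is stated only for oriented intervals with $a<b$, and $\xi_h$ is not a function of $h$ in any controlled way. Neither issue causes trouble. The sign convention makes the identity $F(x+h)-F(x) = f(\xi_h)h$ hold in both directions. Also, the argument only uses the bound $|\xi_h - x| \le |h|$, not any regularity of $h \mapsto \xi_h$. As an alternative avoiding the mean value theorem, monotonicity (\Cref{prop:standard}(3)) applied to $f(t) - f(x)$ bounds $\bigl|\frac{F(x+h)-F(x)}{h} - f(x)\bigr|$ by $\sup_{|t-x|\le |h|} |f(t)-f(x)|$, which gives the same conclusion.
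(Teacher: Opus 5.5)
Your proof is correct, but its main line differs from the paper's. The paper does not use \Cref{lem:MVT-int} here. Instead it subtracts the constant inside the integral,
\[
\frac{F(x+h)-F(x)}{h} - f(x) = \frac{1}{h}\int_x^{x+h}\bigl(f(t)-f(x)\bigr)\dd t,
\]
and bounds the integrand by $\eps$ on an interval of length $|h|$. That is exactly the monotonicity alternative in your last sentence.

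Your route instead gives the exact identity $\frac{F(x+h)-F(x)}{h} = f(\xi_h)$ with $|\xi_h - x| \le |h|$, which reduces everything to continuity of $f$ at $x$ in one step. This matches the ``one sample per cell'' device the paper itself uses in \Cref{thm:u-sub,thm:uv-sub}. Your handling of $h<0$ is also more explicit than the paper's, which applies the absolute-value bound to a reversed integral without comment.

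What the paper's estimate buys is economy of hypotheses. It needs neither the extreme nor the intermediate value theorem, and it uses continuity of $f$ only at the single point $x$, plus integrability nearby. So the same argument shows $\deriv{x}F(x) = f(x)$ at every point of continuity of a merely integrable $f$. Your route cannot give that, because \Cref{lem:MVT-int} needs $f$ continuous on the whole interval between $x$ and $x+h$; a step function shows the lemma's conclusion can fail otherwise. Under the theorem's actual hypothesis that $f$ is continuous on $[a,b]$, both arguments are complete.
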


\begin{proof}
Fix $x \in [a, b]$ and $\eps > 0$. By continuity of $f$, choose $\delta > 0$ so that $|t - x| < \delta$ implies $|f(t) - f(x)| < \eps$. For $0 < |h| < \delta$ with $x + h \in [a, b]$,
\[
\frac{F(x+h) - F(x)}{h} - f(x) = \frac{1}{h}\int_x^{x+h} \bigl(f(t) - f(x)\bigr)\dd t.
\]
The integrand is bounded by $\eps$ in absolute value on the interval of integration, which has length $|h|$. The right side therefore has absolute value at most $\eps$. Letting $h \to 0$, we deduce $\deriv{x}F(x) = f(x)$.
\end{proof}

For the converse direction we need one classical fact, which we state without proof.

\begin{lemma}[Constant criterion]\label{lem:zero-deriv}
If $H \colon [a,b] \to \R$ is differentiable on $[a,b]$ with $\deriv{x}H(x) \equiv 0$, then $H$ is constant.
\end{lemma}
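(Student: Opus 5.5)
The plan is to prove the constant criterion in the classical way: first Rolle's theorem via the extreme value theorem, then a mean value theorem obtained by subtracting a secant line, and finally apply that mean value theorem to $H$. Since the paper runs integral-first, one might hope to write $H(x) - H(a) = \int_a^x 0\dd t = 0$. But that would use the differential-to-integral direction of the Fundamental Theorem of Calculus, which is exactly what this lemma is meant to supply. So the argument has to be purely differential.

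\textbf{Rolle's theorem.} Suppose $K\colon[a,b]\to\R$ is differentiable on $[a,b]$ (hence continuous) and $K(a)=K(b)$. By the extreme value theorem, $K$ attains a maximum and a minimum on $[a,b]$.

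\begin{enumerate}
\item If both extrema occur at the endpoints, then $K$ is constant, and its derivative vanishes everywhere.
\item Otherwise some extremum occurs at an interior point $c$. Say it is a maximum. The difference quotient $(K(c+h)-K(c))/h$ is $\le 0$ for $h>0$ and $\ge 0$ for $h<0$. Since the two-sided limit exists, it must equal $0$.
\end{enumerate}

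\textbf{Mean value theorem.} Apply Rolle to
\[
K(x) = H(x) - H(a) - \frac{H(b)-H(a)}{b-a}(x-a).
\]
This requires linearity of difference quotients and the derivative of $x$, both immediate from the definition. It yields $c\in(a,b)$ with $\deriv{x}H(c) = (H(b)-H(a))/(b-a)$.

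\textbf{Conclusion.} Fix any $x\in(a,b]$ and apply the mean value theorem on $[a,x]$. Differentiability of $H$ on $[a,x]$ is inherited, and we only need interior points. Since $\deriv{x}H\equiv 0$, this gives $H(x)=H(a)$, so $H$ is constant.

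The only step needing care is the Rolle argument at an interior extremum. We must check that differentiability there is two-sided, which holds because $c\in(a,b)$, and that continuity of $H$ follows from differentiability, so that the extreme value theorem applies.
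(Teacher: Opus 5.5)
Your proof is correct. The paper states this lemma without proof as a classical fact, and your Rolle-then-mean-value-theorem argument is the standard proof it implicitly relies on; it uses only the extreme value theorem from the paper's stated prerequisites, the fact that differentiability (one-sided at the endpoints) implies continuity, and linearity of difference quotients. Your circularity remark is also right. The paper deduces the differential-to-integral Fundamental Theorem of Calculus from this very lemma, so writing $H(x)-H(a)=\int_a^x 0\dd t$ would beg the question. Your argument also never uses continuity of $\deriv{x}H$, which matches the lemma's weaker hypothesis.
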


\begin{corollary}[Fundamental Theorem of Calculus, differential-to-integral]\label{cor:the Fundamental Theorem of Calculus-2}
If $G \colon [a, b] \to \R$ is differentiable with $\deriv{x}G(x)$ continuous on $[a, b]$, then
\[
\int_a^b \deriv{t}G(t)\dd t = G(b) - G(a).
\]
\end{corollary}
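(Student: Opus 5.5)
Set $g(t) \coloneqq \deriv{t}G(t)$, which is continuous on $[a,b]$ by hypothesis and hence Riemann integrable by \Cref{prop:standard}. Define the auxiliary function
\[
F(x) \coloneqq \int_a^x g(t)\dd t, \qquad H(x) \coloneqq G(x) - F(x), \qquad x \in [a,b].
\]
We will show that $H$ is constant and then evaluate it at the two endpoints.

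By \Cref{thm:the Fundamental Theorem of Calculus-1}, applied to the continuous function $g$, $F$ is differentiable on $[a,b]$ with $\deriv{x}F(x) = g(x)$. This holds with one-sided derivatives at $a$ and $b$, matching the convention in the definition. Since $G$ is differentiable on $[a,b]$ with derivative $g$, the difference $H$ is differentiable. We will check directly from the definition of the derivative that the difference quotient of $H$ is the difference of the two quotients, so that linearity of limits gives $\deriv{x}H(x) = g(x) - g(x) = 0$ for every $x \in [a,b]$, one-sided at the endpoints.

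Now \Cref{lem:zero-deriv} applies and shows that $H$ is constant on $[a,b]$. In particular $H(b) = H(a)$. Since $F(a) = \int_a^a g(t)\dd t = 0$ by convention, this reads
\[
G(b) - \int_a^b g(t)\dd t = G(a),
\]
which rearranges to the claimed identity.

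There is no real obstacle beyond bookkeeping. The only point to watch is that \Cref{thm:the Fundamental Theorem of Calculus-1} and \Cref{lem:zero-deriv} must both be used with the same one-sided convention at the endpoints, so that ``differentiable on $[a,b]$'' means the same thing for $F$, $G$, and $H$. Continuity of $g$ is used exactly once, to invoke \Cref{thm:the Fundamental Theorem of Calculus-1}; the substantive analytic content is delegated to \Cref{lem:zero-deriv}.
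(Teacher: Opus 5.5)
Your proof is correct and is essentially the paper's own argument. Both set $F(x)=\int_a^x \deriv{t}G(t)\dd t$, use the integral-to-differential Fundamental Theorem to show that $G-F$ has zero derivative (the paper works with $F-G$), apply \Cref{lem:zero-deriv}, and evaluate at the endpoints using $F(a)=0$.
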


\begin{proof}
Set $F(x) = \int_a^x \deriv{t}G(t)\dd t$. By \Cref{thm:the Fundamental Theorem of Calculus-1}, $\deriv{x}F(x) = \deriv{x}G(x)$ on $[a,b]$. Hence $\deriv{x}(F-G)(x) = 0$, and by \Cref{lem:zero-deriv}, $F - G$ is constant on $[a, b]$. Evaluating at $a$ gives $F(a) - G(a) = -G(a)$, so $F(x) = G(x) - G(a)$. Set $x = b$.
\end{proof}

\section{The differential calculus, harvested}\label{sec:harvest}

Every entry in our integral table now becomes an entry in a derivative table by \Cref{thm:the Fundamental Theorem of Calculus-1}. We have done all the work; we have only to read off the consequences.

\begin{theorem}[The standard derivative table]\label{thm:derivative-table}
The following derivative identities hold on the natural domains of the functions involved:
\begin{align*}
&\deriv{x}\exp(x) = \exp(x), && \deriv{x}\log(x) = 1/x, \\
&\deriv{x}x^a = a x^{a-1}\;(a \in \R), && \deriv{x}b^x = b^x \log(b), \\
&\deriv{x}\sin(x) = \cos(x), && \deriv{x}\cos(x) = -\sin(x), \\
&\deriv{x}\tan(x) = \sec^2(x), && \deriv{x}\cot(x) = -\csc^2(x), \\
&\deriv{x}\arctan(x) = \tfrac{1}{1+x^2}, && \deriv{x}\arcsin(x) = \tfrac{1}{\sqrt{1-x^2}}, \\
&\deriv{x}\sinh(x) = \cosh(x), && \deriv{x}\cosh(x) = \sinh(x), \\
&\deriv{x}\tanh(x) = \sech^2(x), && \deriv{x}\arsinh(x) = \tfrac{1}{\sqrt{1+x^2}}.
\end{align*}
\end{theorem}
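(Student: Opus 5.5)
The plan is to read each entry off an integral identity already in the table. Each such identity has the form $G(x) = G(x_0) + \int_{x_0}^x g(t)\dd t$ on an interval with continuous $g$, and \Cref{thm:the Fundamental Theorem of Calculus-1} then gives $\deriv{x}G(x) = g(x)$. Additive constants vanish under differentiation, and when $x$ lies below the base point we use the convention $\int_{x_0}^x = -\int_x^{x_0}$ together with additivity. Since the theorem is stated on a closed interval $[a,b]$, we apply it on a compact interval containing $x$ in its interior, so the derivative is the two-sided one.

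The direct entries are as follows.
\begin{itemize}
\item $\log$: immediate from \Cref{def:log}.
\item $\exp$: \Cref{thm:int-exp} gives $e^x = e^0 + \int_0^x e^t\dd t$.
\item $b^x$: \Cref{thm:int-exp} gives $b^x = 1 + \log(b)\int_0^x b^t\dd t$ for $b \neq 1$; the case $b=1$ is trivial.
\item $\sin$ and $\cos$: \Cref{thm:int-cos-sin} gives $\sin x = \int_0^x\cos$ and $\cos x = 1 - \int_0^x \sin$.
\item $\tan$ on $(-\pi/2,\pi/2)$: \Cref{thm:int-sec2}. Other branches follow from $\pi$-periodicity of $\tan$ and $\sec^2$, or by rerunning the telescoping argument on any interval avoiding zeros of $\cos$.
\item $\cot$: the cosecant half of \Cref{thm:int-sec2}, rewritten as $\cot(b) = \cot(a) - \int_a^b \csc^2$ with $a$ fixed, gives $-\csc^2$.
\item $\arctan$ and $\arcsin$: \Cref{prop:arctan,prop:arcsin}.
\item Hyperbolic entries: $\sinh$, $\cosh$, $\tanh$ and $\arsinh$ come from \Cref{prop:hyperbolic}.
\end{itemize}

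The power rule needs care. For $a \neq -1$ and $x>0$, fix $p$ with $0<p<x$. \Cref{prop:int-xa} gives
\[
x^{a+1} = p^{a+1} + (a+1)\int_p^x t^a\dd t,
\]
so $\deriv{x}x^{a+1} = (a+1)x^a$. Replacing $a+1$ by $a$ yields $\deriv{x}x^a = a x^{a-1}$ for every $a \neq 0$, and $a=0$ is trivial. This handles all real $a$ on $(0,\infty)$, including $a=0$, where the exponent $-1$ would otherwise be excluded. The statement says ``natural domains'', so for integer $a \ge 0$ on all of $\R$ we instead invoke \Cref{thm:int-power}. Negative integers on $(-\infty,0)$ follow by the symmetric version of \Cref{prop:int-xa}, or we simply restrict to $x>0$ if necessary.

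The main obstacle is this reindexing of the power rule and the domain bookkeeping: matching each integral identity's interval of validity with the stated natural domain, especially the branches of $\tan$ and $\cot$ and negative bases for $x^a$. Everything else is a one-line citation. I would organize the written proof as a table pairing each entry with the integral result it comes from, and treat the power rule and the periodic extensions explicitly.
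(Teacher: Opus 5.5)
Your proposal is correct and follows the paper's proof: the paper also writes each function as a constant plus $\int_{x_0}^x g(t)\,dt$ using the integral table and applies the integral-to-differential Fundamental Theorem, working out $\exp$ and $\log$ explicitly and calling the rest ``equally immediate.'' Your additions (applying the theorem on a compact interval with $x$ in its interior, reindexing the real-power integral to get $\deriv{x}x^a = a x^{a-1}$ with $a=0$ handled separately, and extending $\tan$ and $\cot$ by periodicity) only make explicit what the paper leaves implicit. The one place you go past what the paper proves is negative integer powers for $x<0$: that would need a reflection $t\mapsto -t$ through the change-of-variables theorem, so restricting to $x>0$ there, as you offer, is the safe choice.
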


\begin{proof}
Each is an instance of \Cref{thm:the Fundamental Theorem of Calculus-1} applied to the corresponding result of \Cref{sec:direct,sec:derived}. For example, by \Cref{thm:int-exp}, $\exp(x) = \exp(0) + \int_0^x \exp(t)\dd t = 1 + \int_0^x \exp(t)\dd t$. The integrand is continuous, so \Cref{thm:the Fundamental Theorem of Calculus-1} gives $\deriv{x}\exp(x) = \exp(x)$. By \Cref{def:log}, $\log(x) = \int_1^x dt/t$, so $\deriv{x}\log(x) = 1/x$. The remaining lines are equally immediate.
\end{proof}

The product rule and chain rule fall out of \Cref{thm:uv-sub,thm:u-sub} via \Cref{thm:the Fundamental Theorem of Calculus-1}.

\begin{corollary}[Product rule]\label{cor:product-rule}
If $u, v$ are differentiable on $[a, b]$ with continuous derivatives, then $uv$ is differentiable with $\deriv{x}(u(x)v(x)) = \deriv{x}u(x)\,v(x) + u(x)\,\deriv{x}v(x)$.
\end{corollary}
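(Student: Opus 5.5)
We deduce the product rule from integration by parts (\Cref{thm:uv-sub}) together with \Cref{thm:the Fundamental Theorem of Calculus-1}. Write $p(t) = \deriv{t}u(t)$ and $q(t) = \deriv{t}v(t)$; by hypothesis these are continuous on $[a,b]$. The first step is to put $u$ and $v$ into the integral form that \Cref{thm:uv-sub} requires. By \Cref{cor:the Fundamental Theorem of Calculus-2}, applied on each subinterval $[a,x]$ with $x \in [a,b]$, we have
\[
u(x) - u(a) = \int_a^x p(t)\dd t, \qquad v(x) - v(a) = \int_a^x q(t)\dd t.
\]
Both $u$ and $v$ are continuous, being differentiable.

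Next we apply \Cref{thm:uv-sub} on $[a,x]$ for each $x \in [a,b]$. This is legitimate because its hypotheses restrict to subintervals. We obtain
\[
u(x)v(x) = u(a)v(a) + \int_a^x \bigl(p(t)v(t) + u(t)q(t)\bigr)\dd t,
\]
where linearity (\Cref{prop:standard}) combines the two integrals into one. The integrand $p v + u q$ is continuous on $[a,b]$, as sums and products of continuous functions.

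Finally, \Cref{thm:the Fundamental Theorem of Calculus-1} applied to $f = pv + uq$ shows that $x \mapsto \int_a^x f(t)\dd t$ is differentiable on $[a,b]$ with derivative $f(x)$. The constant $u(a)v(a)$ contributes nothing, since its difference quotients vanish identically. Hence $uv$ is differentiable with $\deriv{x}(uv)(x) = p(x)v(x) + u(x)q(x)$, which is the claim.

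The only point needing care is the first step, namely checking that the hypotheses of \Cref{thm:uv-sub} hold on every $[a,x]$ and not merely on $[a,b]$. This is routine once \Cref{cor:the Fundamental Theorem of Calculus-2} is invoked interval by interval. Endpoint derivatives are one-sided throughout, consistently with the definitions in \Cref{thm:the Fundamental Theorem of Calculus-1}.
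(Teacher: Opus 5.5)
Your proof is correct and follows the paper's own argument: apply integration by parts (\Cref{thm:uv-sub}) on each $[a,x]$, then differentiate the resulting integral using the integral-to-differential Fundamental Theorem and discard the constant $u(a)v(a)$. You explicitly invoke the differential-to-integral Fundamental Theorem to put $u$ and $v$ into the integral form that \Cref{thm:uv-sub} requires; this fills in a step the paper leaves implicit when it refers to ``the integrals defining $u$ and $v$ from their derivatives.''
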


\begin{proof}
By \Cref{thm:uv-sub} applied to the integrals defining $u$ and $v$ from their derivatives, with $p(t) = \deriv{t}u(t)$ and $q(t) = \deriv{t}v(t)$ both continuous, for every $x \in [a, b]$,
\[
\int_a^x \deriv{t}u(t)\, v(t)\dd t + \int_a^x u(t)\,\deriv{t}v(t)\dd t = u(x)v(x) - u(a)v(a).
\]
The integrand on the left is continuous, so \Cref{thm:the Fundamental Theorem of Calculus-1} gives the left side as a differentiable function of $x$ with derivative $\deriv{x}u(x)\,v(x) + u(x)\,\deriv{x}v(x)$. The right side equals the left side and so is also differentiable; subtracting the constant $-u(a)v(a)$ shows that $x \mapsto u(x)v(x)$ is differentiable with $\deriv{x}(u(x)v(x)) = \deriv{x}u(x)\,v(x) + u(x)\,\deriv{x}v(x)$.
\end{proof}

\begin{corollary}[Chain rule]\label{cor:chain-rule}
Let $G\colon [a, b] \to \R$ be differentiable with continuous derivative $g$, and let $f$ be continuous on a closed interval containing $G([a, b])$. Let $F$ be any antiderivative of $f$ on that interval. Then $F \circ G$ is differentiable on $[a, b]$ with $\deriv{x}(F(G(x))) = f(G(x)) \cdot g(x)$.
\end{corollary}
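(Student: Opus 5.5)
The chain rule will come out of \Cref{thm:u-sub} in the same way the product rule came out of \Cref{thm:uv-sub}: first express $F(G(x))$ as an integral in $x$ with a continuous integrand, then differentiate that integral using \Cref{thm:the Fundamental Theorem of Calculus-1}.

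Let $J$ be the closed interval containing $G([a,b])$, and fix a base point $c \in J$. Since $F$ is an antiderivative of $f$ and $f$ is continuous, \Cref{cor:the Fundamental Theorem of Calculus-2} gives
\[
F(y) - F(c) = \int_c^y f(u)\dd u \quad \text{for all } y \in J,
\]
using the sign convention when $y < c$. The same corollary applied to $G$, whose derivative $g$ is continuous, gives $G(x) - G(a) = \int_a^x g(t)\dd t$ for every $x \in [a,b]$. This is exactly the hypothesis on $G$ in \Cref{thm:u-sub}.

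Next, fix $x \in [a,b]$ and apply \Cref{thm:u-sub} on $[a,x]$. The case $x = a$ is trivial, and the hypotheses restrict to subintervals without change. This yields
\[
\int_a^x f(G(t))\,g(t)\dd t = \int_{G(a)}^{G(x)} f(u)\dd u = F(G(x)) - F(G(a)),
\]
where the last equality comes from subtracting the two instances of the display above, with $y = G(x)$ and $y = G(a)$, and using \Cref{prop:standard}(2).

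Finally, the integrand $t \mapsto f(G(t))g(t)$ is continuous on $[a,b]$, since it is built from continuous functions by composition and multiplication. So \Cref{thm:the Fundamental Theorem of Calculus-1} shows that the left side is differentiable in $x$ with derivative $f(G(x))g(x)$. The right side differs from $F(G(x))$ only by the constant $F(G(a))$. Hence $F \circ G$ is differentiable with the claimed derivative, one-sided at the endpoints.

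There is no real obstacle. The only step needing care is identifying an arbitrary antiderivative $F$ with the integral $\int_c^y f(u)\dd u$ up to a constant. That identification is what \Cref{cor:the Fundamental Theorem of Calculus-2}, and ultimately \Cref{lem:zero-deriv}, provide; alternatively one can apply \Cref{lem:zero-deriv} directly to $F(y) - \int_c^y f(u)\dd u$, using \Cref{thm:the Fundamental Theorem of Calculus-1}.
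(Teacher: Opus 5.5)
Your proof is correct and is essentially the paper's argument. You apply \Cref{thm:u-sub} on $[a,x]$, rewrite the right side as $F(G(x)) - F(G(a))$, and differentiate the left side using \Cref{thm:the Fundamental Theorem of Calculus-1}. You are in fact more careful than the paper, which silently uses the two applications of \Cref{cor:the Fundamental Theorem of Calculus-2} that you make explicit: one puts $G$ into the integral form $G(x) - G(a) = \int_a^x g(t)\dd t$ that \Cref{thm:u-sub} requires, and the other identifies $\int_{G(a)}^{G(x)} f(u)\dd u$ with $F(G(x)) - F(G(a))$ for an arbitrary antiderivative $F$.
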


\begin{proof}
By \Cref{thm:u-sub}, for every $x \in [a, b]$,
\[
\int_a^x f(G(t)) g(t)\dd t = \int_{G(a)}^{G(x)} f(u)\dd u = F(G(x)) - F(G(a)).
\]
The left side, as the integral of the continuous function $f \circ G \cdot g$, is a differentiable function of $x$ by \Cref{thm:the Fundamental Theorem of Calculus-1} with derivative $f(G(x))g(x)$. The right side equals the left side and so is also differentiable; subtracting the constant $-F(G(a))$ shows that $x \mapsto F(G(x))$ is differentiable with $\deriv{x}(F(G(x))) = f(G(x)) \cdot g(x)$.
\end{proof}

\begin{remark}
This is the structurally clean way to see what the chain rule and product rule are. They are exactly the differential shadows of the substitution and integration-by-parts theorems for integrals. In the differential-first alternative, this fact is obscured, because the derivative rules come first and the substitution rules are presented as their corollaries via the Fundamental Theorem of Calculus. the Fundamental Theorem of Calculus is the dictionary that translates between the two orderings, and either may be taken as the starting point.
\end{remark}

\section{Closing remarks}\label{sec:closing}

Having worked through the exposition, we may briefly recapitulate its virtues.

First, the direct integral evaluations are self-justifying. The geometric-series arguments for $\int b^t\dd t$ and the trigonometric integrals, the telescoping argument for $\int \sec^2(t)\dd t$, and the functional-equation derivation of $\log$ from $\int dt/t$ all reveal structural features of the integrands that are invisible in a derivative-first treatment. A student who has seen the same geometric-series engine drive three different integral evaluations in sequence comes away with a sharper sense of what makes those integrands tractable than one who has merely learned their antiderivatives by reversing the derivative table.

Second, stating and proving the substitution lemmas without reference to derivatives shows them in their natural form as theorems about Riemann sums. The chain rule and product rule recovered in \Cref{sec:harvest} are revealed as the differential shadows of the substitution and integration-by-parts lemmas. In the differential-first alternative, this structural relationship runs the other way and is therefore obscured.

Third, the Fundamental Theorem of Calculus appears once, at its climactic structural moment, in the direction integral-to-differential. It plays the role of a dictionary between two separately-built vocabularies rather than the role of a computational crutch wielded from the start.

A fair accounting of the disadvantages is also in order. The direct integral evaluations are not uniformly elegant; \Cref{thm:int-power}, for instance, requires a digression into Faulhaber's formula that a derivative-first treatment avoids by producing $\deriv{x}x^{n+1} = (n+1)x^n$ mechanically. The substitution $u = \sec(t) + \tan(t)$ that appears in \Cref{prop:int-sec} is a rabbit out of a hat in our framework, as it is in any framework, though a derivative-first treatment arguably mystifies it little more gracefully. And the computational fluency that differential-first students develop in their first semester --- the mechanical comfort with the chain rule and product rule --- is something our approach has to engineer separately, once the bridge has been crossed.

The differential-first tradition thus retains its appeal, particularly in curricula where early mechanical fluency is valued above structural transparency. It has its own distinguished adherents, and we have attempted neither to minimize its advantages nor to elide its costs. For instructors preparing an introductory course on calculus, however, we see no compelling reason to depart from the natural integral-to-differential order given here.

\bibliographystyle{amsplain}
\bibliography{integral-differential}

\end{document}